

\documentclass[final,1p,times]{elsarticle}

\usepackage{epsfig,amsmath,amssymb,amsthm,color,amsfonts,epstopdf,mathrsfs}

\everymath{\displaystyle}

\def\CC{{\mathbb{C}}}

\def\RR{{\mathbb{R}}}
\def\NN{{\mathbb{N}}}

\newcommand{\bb}[1]{\begin{equation}\label{#1}}
\newcommand{\ee}{\end{equation}}
\newcommand{\bbb}{\begin{eqnarray}}
\newcommand{\eee}{\end{eqnarray}}
\newcommand{\bbbb}{\begin{eqnarray*}}
\newcommand{\eeee}{\end{eqnarray*}}

\newcommand{\nnn}{\nonumber}

\definecolor{green1}{rgb}{0.1,0.5,0.0}

\newtheorem{theorem}{Theorem}
\newtheorem{lemma}{Lemma}

\theoremstyle{remark}
\newtheorem{rem}{Remark}
\theoremstyle{definition}
\newtheorem{definition}{Definition}
\theoremstyle{plain}
\newtheorem{assume}{Assumption}
\theoremstyle{definition}
\newtheorem{example}{Example}

\usepackage[usenames,dvipsnames,svgnames,table]{xcolor}

\newcommand{\clearallnum}{
    \numberwithin{equation}{section} 
    \numberwithin{theorem}{section} 
    \numberwithin{lemma}{section} 
    \numberwithin{cor}{section} 
    \numberwithin{rem}{section} 
    \numberwithin{definition}{section} 
    \numberwithin{example}{section}
    \numberwithin{figure}{section}
    }


\journal{}

\begin{document}

\begin{frontmatter}



\title{Analysis of an approximation to a fractional extension problem}


\author{Joshua L. Padgett}
\ead{joshua.padgett@ttu.edu}

\address{Department of Mathematics and Statistics, Texas Tech University, Broadway and Boston, Lubbock, TX 79409-1042}

\begin{abstract}
The purpose of this article is to study an approximation to an abstract Bessel-type problem, which is a generalization of the extension problem associated with fractional powers of the Laplace operator. Motivated by the success of such approaches in the analysis of time-stepping methods for abstract Cauchy problems, we adopt a similar framework herein. The proposed method differs from many standard techniques, as we approximate the true solution to the abstract problem, rather than solve an associated discrete problem. The numerical method is shown to be consistent, stable, and convergent in an appropriate Banach space. These results are built upon well understood results from semigroup theory. Numerical experiments are provided to demonstrate the theoretical results.
\end{abstract}

\begin{keyword}
Fractional diffusion \sep Nonlocal operators \sep Singular equations \sep Degenerate equations \sep Bessel equations \sep Semigroup methods



\end{keyword}

\end{frontmatter}



\section{Introduction} \clearallnum

Due to its wide array of applications, the computation of fractional powers of the Laplacian, and other elliptic operators, has become a problem of great interest \cite{antil2015fem,chen2015pde,nochetto2015pde,nochetto2016pde}. Fractional powers of operators have received this attention due to their applicability to the accurate modeling of real-world problems with varying scales. Examples of such problems are found in porous media flow, peridynamics, nonlocal continuum field theory, finance, and many others (see \cite{10.1086/338705,cushman1993nonlocal,eringen2002nonlocal,silling2000reformulation}, and the references therein). However, due to the nonlocal nature of fractional problems, the development of accurate and efficient computational algorithms has been greatly hindered. In response to this hurdle, the revolutionary work by Caffarelli and Silvestre demonstrated that the nonlocal fractional Laplacian problem may be recast into an equivalent local problem which is amenable to certain standard numerical techniques \cite{doi:10.1080/03605300600987306}.

In order to demonstrate, fix $0<s<1$ and consider the function $v\,:\,\mathbb{R}^d \to \mathbb{R}$ which solves
\bb{aaa1}
(-\Delta)^s v = f, \quad x\in\mathbb{R}^d,\\
\ee
where $f$ is a given function of appropriate regularity and the fractional power of the Laplacian is defined via the hypersingular integral
\bb{fracdef}
(-\Delta)^sv(x) \mathrel{\mathop:}= C_{d,s}\int_{\RR^d}\frac{v(x) - v(z)}{|x-z|^{d+2s}}\,dz
\ee
with $C_{d,s}$ being some normalization constant \cite{doi:10.1080/03605300600987306}. Caffarelli and Silvestre then showed that for $0<s<1,$ the fractional Laplacian in $(\ref{aaa1})$ can be realized via the Dirichlet-to-Neumann map for a function $u\,:\,\RR^d\times\RR_+\to\RR.$ The function $u$ then satisfies the following Bessel-type equation
\bb{e1}
\left\{\begin{array}{ll}
\partial_t^2 u(x,t) + \frac{1-2s}{t}\partial_t u(x,t) = -\Delta_x u(x,t), & \quad (x,t)\in \RR^d\times \RR_+,\\
u(x,0) = v(x), & \quad x\in \RR^d,
\end{array}\right.
\ee
where $v$ is the solution to $(\ref{aaa1}).$ Moreover, the function $u$ satisfying $(\ref{e1})$ may be viewed as the harmonic extension of $v$ to the fractional dimension $2-2s.$ One can then calculate $(-\Delta)^sv$ as
\bb{e2}
c_sf(x) = c_s(-\Delta)^sv(x) = -\lim_{t\to 0^+} t^{1-2s}\partial_t u(x,t),\quad x\in \RR^d,
\ee
where $c_s$ is a constant depending upon $s,$ but not on the dimension $d,$ and $u$ solves $(\ref{e1}).$ Moreover, the final equality is $(\ref{e2})$ is known as the {\em conormal derivative}. We have emphasized the relationship between $v$ and $f$ in $(\ref{e2})$ in order to clarify the ensuing exposition. Further details and applications of this method may be found in \cite{doi:10.1080/03605302.2017.1363229,doi:10.1080/03605300600987306,Gale2013,doi:10.1080/03605301003735680}.

Several numerical algorithms for solving problems involving fractional powers of the Laplacian have been developed based upon the Caffarelli-Silvestre extension technique \cite{d2013fractional,huang2014numerical,nochetto2015pde,nochetto2016pde}. These methods have also been extended to problems involving general elliptic operators and various other physically relevant nonlocal problems. Despite the increased interest in such methods, these explorations are still in their infancy and merit further consideration. Moreover, there is the need to improve the efficiency and accuracy of existing numerical methods.

In order to develop more robust approximations to $(\ref{e1}),$ we consider the following abstract Bessel-type problem
\bb{e3}
\left\{\begin{array}{ll}
u''(t) + \frac{\alpha}{t}u'(t) = -Au(t), &\quad t\in(0,\infty),\\
u(0) = u_0 \in X,
\end{array}\right.
\ee
where $X$ is a real separable Banach space, $A\,:\,D(A)\subseteq X\to X$ is a densely defined linear operator, and $\alpha\mathrel{\mathop:}= 1-2s,\ s\in (0,1).$ There have been numerous studies concerned with approximating particular forms of $(\ref{e3}),$ but herein we present a novel method and demonstrate its properties in an arbitrary Banach space for appropriate abstract operators $A.$ In particular, such analysis allows for {\em mesh independent} results, which are quite useful in application.

Analysis and representations of the true solution of $(\ref{e3})$ have been studied in multiple settings \cite{doi:10.1080/03605302.2017.1363229,Gale2013,doi:10.1080/03605301003735680}. However, to the author's knowledge, approximations to $(\ref{e3})$ in this abstract setting have yet to be explored. In particular, the true solution to $(\ref{e3})$ involves the abstract semigroup generated by the operator $A$ and there have been few methods employing direct approximations of this operator. These approximations preserve numerous desirable properties inherent to the original semigroup operator, often resulting in superior approximations, resulting in qualitatively superior approximations \cite{budd1999geometric,hairer2006geometric,Josh1,PADGETT2018210}.

This article is organized as follows. In Section 2 relevant mathematical preliminaries are outlined for the reader's convenience. Section 3 is concerned with the development and stability of the proposed approximation to $(\ref{e1}).$ Section 4 provides the necessary results to guarantee convergence of the proposed method. Section 5 presents some concrete example of our abstract problem and provides numerical experiments verifying the analytic result. Finally, concluding remarks and an outline of future endeavors are given in Section 6.

\section{Preliminaries} \clearallnum

Let $X$ be a real separable Banach space with norm denoted $\|\cdot\|$ and let $B(X)$ denote the Banach algebra of bounded operators on $(X,\|\cdot\|).$ We wish to consider the following Bessel-type problem
\bb{b1}
\left\{\begin{array}{ll}
u''(t) + \frac{\alpha}{t}u'(t) = -Au(t), & \quad t\in (0,\infty),\\
u(0) = u_0 \in X, & 
\end{array}\right.
\ee
where $\alpha\mathrel{\mathop:}=1-2s,\ s\in (0,1),$ and $A\,:\,D(A)\subseteq X\to X$ is linear. The operator norm on $X$ will also be denoted by $\|\cdot\|.$ Before providing details regarding a solution to $(\ref{b1})$ we detail a necessary assumption and some relevant background.

\begin{assume}
Let the operator $A$ be closed and densely defined in $X.$ Moreover, let $A$ be strictly $m-$dissipative in $X.$ 
\end{assume}

We recall that an operator $A$ is called {\em strictly dissipative} if and only if
$$\|(I-tA)u\| > \|u\|,\quad \mbox{for all}\ u\in D(A),\ t\ge 0,$$
and it is called {\em strictly $m-$dissipative} if and only if it satisfies the additional range condition
$$R(I-tA) = X,\quad\mbox{for all}\ t\ge 0.$$
The notion of a dissipative operator may also be formalized in the following manner. Define the (left) semi-inner-product $[\cdot,\cdot]\,:\,X\times X\to \RR$ on the Banach space $X$ as
\bb{semi-inner}
[u,v]\mathrel{\mathop:}= \|v\|\lim_{\varepsilon\to 0^-}\frac{1}{\varepsilon}\left(\|v + \varepsilon u\| - \|v\|\right),\quad u,v\in X.
\ee
Then $A$ is strictly dissipative if and only if
$$\mbox{Re}[Au,u] < 0,\quad\mbox{for all}\ u\in D(A).$$
Note that the limit in $(\ref{semi-inner})$ exists as $[\cdot,\cdot]$ is a G-differential of the norm $\|\cdot\|.$ If $X$ is a Hilbert space then we have that $(\ref{semi-inner})$ coincides with the true inner product on $X,$ that is $[\cdot,\cdot] = \langle \cdot,\cdot\rangle.$ Moreover, we have that $[u,u] = \|u\|^2,$ for any $u\in X,$ and
$$-\|u\|\,\|v\| \le [u,v]\le \|u\|\,\|v\|,$$
for any $u,v\in X.$ More details regarding properties of $(\ref{semi-inner})$ may be found in \cite{deimling2010nonlinear}.

\begin{rem}
It is worth noting that many existing works in the direction of abstract numerical analysis often require that an operator simply be {\em m-dissipative}. The current work requires a slightly stricter result in order to guarantee the appropriate decay of the solution (a requirement similar to that in \cite{Gale2013,nochetto2015pde}).
\end{rem}

From this, it readily follows that the resolvent operator $R(t,A)\mathrel{\mathop:}= (I-tA)^{-1}\,:\,X\to X$ of a strictly $m-$dissipative operator is well-defined and a contraction in $X.$ It also follows that strictly $m-$dissipative operators generate strongly continuous semigroups of contractions \cite{hansen2013convergence,lumer1961dissipative}. We will denote the semigroup generated by $A$ by $(T(t))_{t\ge 0}.$ Note that if $X$ is reflexive then every strictly $m-$dissipative operator is densely defined. More details regarding $m-$dissipative operators and their properties can be found in \cite{barbu2010nonlinear,lumer1961dissipative}.

The following example demonstrates that standard elliptic operators are examples of the proposed abstract formulation.

\begin{example}
Let $\Omega\subset \RR^d$ be a bounded domain with smooth boundary. We may then consider the operator $A = \Delta$ with homogeneous Dirichlet boundary conditions, on $\partial\Omega.$ Let $X = H_0^1(\Omega)\cap H^2(\Omega),$ and for $u,v\in X,$ we define
$$[u,v] = \int_\Omega uv\, d\Omega\quad \mbox{and}\quad \|u\|^2 = [u,u].$$
We then have, by employing integration by parts and standard Sobolev-type inequalities, that
$$[Au,u] = -[\nabla u,\nabla u] \le -C[u,u],$$
where $C$ is some positive constant. Thus, we have that $\mbox{Re}[Au,u] < 0$ for all $u\in X.$ The range condition follows from standard results, as well.
\end{example}

We also define the logarithmic norm of an operator $A$ with respect to the semi-inner-product $[\cdot,\cdot]$ given by $(\ref{semi-inner})$ to be 
\bb{lognorm}
\mu(A) \mathrel{\mathop:}= \sup_{u\in D(A)}\frac{[Au,u]}{\|u\|^2}.
\ee
Logarithmic norms have been well studied and more details regarding their properties may be found in \cite{soderlind2006logarithmic}. 

\begin{example}
If we consider the situation in Example 2.1, again, then it is easy to see that $\mu(A) < 0,$ where $A = \Delta.$
\end{example}

\begin{rem}
It is worth noting that one could easily have considered (the negation) of any strongly elliptic operator in Examples 2.1 and 2.2, not just the Laplacian. In fact, one could equivalently define strongly elliptic operators as any operator $L$ such that $\mu(L) > 0.$
\end{rem}

\begin{rem}
While not the focus of this work, one may easily use $(\ref{lognorm})$ to define the notion of ellipticity of nonlinear operators on $X.$ This approach can be quite useful and allows for the consideration of more generalized notions of numerical stability for stiff systems of equations.
\end{rem}

The above formulations allow us to develop the following, very useful, result.

\begin{lemma}
Let $A\,:\,D(A)\subseteq X \to X$ be the generator of a strongly continuous semigroup $(T(t))_{t\ge 0}$ in $D(A).$ Then, 
$$\|T(t)\| \le e^{t\mu(A)},\quad t\ge 0.$$
\end{lemma}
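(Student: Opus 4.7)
The plan is to reduce the operator-norm estimate to a pointwise bound $\|T(t)u_0\|\le e^{t\mu(A)}\|u_0\|$ on the dense subspace $D(A)$ via a one-sided differential inequality, and then extend to all of $X$ by continuity. First, I would fix $u_0\in D(A)$ and set $u(t)=T(t)u_0$. Since $A$ generates $(T(t))_{t\ge 0}$ and $u_0\in D(A)$, the orbit $u$ is continuously differentiable with $u(t)\in D(A)$ and $u'(t)=Au(t)$ for all $t\ge 0$. Let $\phi(t)=\|u(t)\|$. At any $t$ with $\phi(t)>0$, the expansion $u(t+h)=u(t)+hu'(t)+o(h)$, combined with the reverse triangle inequality and the definition $(\ref{semi-inner})$ of the semi-inner-product, identifies the left derivative
\begin{equation*}
\lim_{h\to 0^-}\frac{\phi(t+h)-\phi(t)}{h}=\lim_{h\to 0^-}\frac{\|u(t)+hu'(t)\|-\|u(t)\|}{h}=\frac{[u'(t),u(t)]}{\|u(t)\|}.
\end{equation*}

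Next, substituting $u'(t)=Au(t)$ and appealing to the definition $(\ref{lognorm})$ of the logarithmic norm gives the one-sided differential inequality $D^-\phi(t)\le \mu(A)\phi(t)$. Setting $\psi(t)=e^{-t\mu(A)}\phi(t)$, one finds $D^-\psi(t)\le 0$, and since $\psi$ is continuous, a standard Dini-derivative comparison (obtained, for instance, by applying the classical ``$D^+f\ge 0$ implies $f$ is non-decreasing'' criterion to the time-reversed function $t\mapsto \psi(T-t)$) forces $\psi$ to be non-increasing on every $[0,T]$. Therefore $\phi(t)\le e^{t\mu(A)}\phi(0)$, i.e., $\|T(t)u_0\|\le e^{t\mu(A)}\|u_0\|$. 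If $u(t_0)=0$ at some $t_0$, the semigroup property forces $u\equiv 0$ on $[t_0,\infty)$, so the estimate holds trivially past $t_0$, while on $[0,t_0)$ the previous argument applies.

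Finally, since $A$ is densely defined (Assumption 1) and each $T(t)$ is bounded, the pointwise bound on $D(A)$ extends by continuity to all $u_0\in X$; taking the supremum over $\|u_0\|\le 1$ yields the claimed operator-norm inequality. The main technical subtlety is the rigorous handling of the chain rule and the Gr\"onwall step for the map $t\mapsto \|u(t)\|$, which is in general only one-sided differentiable; however, the semi-inner-product $(\ref{semi-inner})$ is precisely the device that encodes this one-sided derivative of the norm, so the entire argument reduces to working consistently with left Dini derivatives and is otherwise routine.
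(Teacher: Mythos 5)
Your proof is correct and follows essentially the same route as the paper's: differentiate $t\mapsto\|T(t)u_0\|$ along the orbit, identify the one-sided derivative with $[Au,u]/\|u\|$ via the semi-inner-product $(\ref{semi-inner})$, bound it by $\mu(A)\|u\|$ using $(\ref{lognorm})$, and integrate the resulting differential inequality. The only difference is that you spell out the details the paper delegates to a citation (the Dini-derivative comparison, the zero set of the orbit, and the density extension from $D(A)$ to $X$), which is fine.
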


\begin{proof}
This result has been proven elsewhere, but we include a proof for completeness. Let $A$ be the generator of the semigroup $(T(t))_{t\ge 0}.$ Then it follows that $x(t) = T(t)x_0$ is the solution to
\bb{log1}
\left\{\begin{array}{ll}
x'(t) = Ax(t), & \quad t>0,\\
x(0) = x_0.
\end{array}\right.
\ee
Recall that the upper right Dini derivative of a function $x(t)$ is defined as
$$D_t^+x(t) \mathrel{\mathop:}= \limsup_{\varepsilon\to 0^+} \frac{x(t+\varepsilon) - x(t)}{\varepsilon}.$$
By direct calculation, we have
\bb{log2}
D_t^+\|x(t)\| = \frac{[Ax,x]}{\|x\|^2}\|x\|
\ee
(see, for instance, \cite{deimling2010nonlinear}). Employing $(\ref{lognorm})$ and solving $(\ref{log2})$ directly yields the desired result. 
\end{proof}

The remainder of this section will serve as a brief introduction to fractional powers of operators in Banach spaces and provide some existence results for problem $(\ref{b1})$. These introductory notes are based primarily on the reference \cite{martinez2001theory}, with some influence from \cite{meichsner2017fractional}.
Let $L(X)$ denote the set of all bounded linear operators mapping $X$ into $X$ and 
$$\rho(A)\mathrel{\mathop:}= \left\{\lambda\in\CC\,:\, (\lambda-A)\ \mbox{is injective and}\ (\lambda-A)^{-1}\in L(X)\right\}$$
denote the resolvent set of the linear operator $A.$ For operators $A\,:\,D(A)\subseteq X \to X$ satisfying Assumption 2.1 and $u\in D(A),$ we define the {\em Balakrishnan operator} with power $s\in(0,1)$ and base $-A$ as
\bb{bala}
J_{-A}^su \mathrel{\mathop:}= \frac{1}{\Gamma(s)\Gamma(1-s)}\int_0^\infty z^{s-1}(z-A)^{-1}(-A)u\,dz.
\ee
This operator will be used to define fractional powers of the operator $A.$ For more details see \cite{martinez2001theory}.

\begin{rem}
The operator $J_{-A}^s$ inherits numerous desirable properties from the underlying operator $A.$ For instance, if $A$ is bounded or injective, then so is $J_{-A}^s$ \cite{martinez2001theory}.
\end{rem}

With $(\ref{bala}),$ we are now able to define fractional powers of abstract operators.

\begin{definition}
Let $A$ be as in Assumption 2.1 and fix $s\in(0,1).$ Then we define $(-A)^s$ as
\begin{itemize}
\item[i.] $(-A)^s \mathrel{\mathop:}= J_{-A}^s,$ for $A\in L(X);$
\item[ii.] $(-A)^s \mathrel{\mathop:}= \left(J_{(-A)^{-1}}^s\right)^{-1},$ for $A$ being unbounded.
\item[iii.] $(-A)^s \mathrel{\mathop:}= \lim_{\delta \to 0^+} (-A + \delta)^s u,$ for $A$ being unbounded, $0\in\sigma(A)$ (where $\sigma(A)$ is the spectrum of $A$), and $D((-A)^s)$ given by
\bbb
&& D(A) = \left\{u\in\overline{D(A)}\,:\,\exists \delta_0>0,\ \forall \delta\in(0,\delta_0) \ \right.\nnn\\ 
&& ~~~~~~~~~~~~~~~~~~~~~~~~~ \left.\mbox{s.t.}\ u\in D((-A+\delta)^s),~\lim_{\delta\to 0^+}(-A+\delta)^s u\ \mbox{exists} \right\}
\eee
\end{itemize}
\end{definition}

It is worth noting that the above definition yields a well-defined closed operator which extends the original Balakrishnan operator. Moreover, in \cite{meichsner2017fractional}, it was shown that for a densely defined linear operator $A,$ we have
\bb{bala1}
\overline{J_{-A}^s} = (-A)^s.
\ee
There is still a wealth of information that one could present to provide background on fractional powers of operators, but these ideas will not be pertinent to the study at hand. Interested readers should see \cite{martinez2001theory} for more details.

Based on Assumption 2.1, we are in fact considering a special case of that studied in \cite{Gale2013}. Though the following result was included in \cite{Gale2013}, the proof was omitted for this special case, hence, we include a proof for completeness. The methods employed below are similar to those in \cite{meichsner2017fractional}.

\begin{theorem}
Let $u_0\in D((-A)^s).$ Then a solution to $(\ref{b1})$ is given by
\bb{usol1}
u(t) = \frac{1}{\Gamma(s)}\int_0^\infty z^{s-1}e^{-t^2/4z}T(z)(-A)^su_0\,dz
\ee
and also satisfies
\bb{uder}
\lim_{t\to 0^+} t^{1-2s}u'(t) = c_s (-A)^su_0,
\ee
where $c_s\mathrel{\mathop:}= 2^{1-2s}\Gamma(1-s)/\Gamma(s).$
\end{theorem}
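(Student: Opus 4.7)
The plan is to verify directly from the integral formula \R{usol1} that $u$ solves the ODE on $(0,\infty)$, that $u(0)=u_0$, and that $\lim_{t\to 0^+}t^{1-2s}u'(t) = c_s(-A)^s u_0$. The tools are (i) differentiation under the integral in $t$, (ii) an integration by parts in $z$ using the semigroup identity $\frac{d}{dz}T(z)v = AT(z)v$ for $v\in D(A)$, and (iii) dominated convergence, with majorants supplied by the Gaussian factor $e^{-t^2/(4z)}$ near $z=0^+$ and by the decay estimate $\|T(z)\|\leq e^{z\mu(A)}$ of Lemma~2.1 near $z=\infty$ (where strict $m$-dissipativity gives the required decay, cf.\ Remark~2.1).

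\textbf{Verifying the ODE.} Set $g(t,z) := z^{s-1}e^{-t^2/(4z)}$. Because $\alpha = 1-2s$, a direct calculation gives
\[\partial_t^2 g + \tfrac{\alpha}{t}\partial_t g = \Bigl(\tfrac{s-1}{z}+\tfrac{t^2}{4z^2}\Bigr)g = \tfrac{d}{dz}g,\]
so differentiation under the integral expresses $u''(t)+\tfrac{\alpha}{t}u'(t)$ as the integral of $\tfrac{d}{dz}g(t,z)\cdot T(z)(-A)^s u_0$ against $dz$. On the other hand, closedness of $A$ lets me move $-A$ inside \R{usol1}, and $-AT(z)(-A)^s u_0 = -\tfrac{d}{dz}T(z)(-A)^s u_0$. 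Integrating by parts in $z$ matches the two expressions. The boundary terms vanish because $e^{-t^2/(4z)}$ decays super-exponentially at $z=0^+$ for $t>0$, and because the product $z^{s-1}\|T(z)(-A)^s u_0\|$ tends to zero as $z\to\infty$ via Lemma~2.1.

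\textbf{Initial datum and conormal limit.} As $t\to 0^+$, $e^{-t^2/(4z)}\uparrow 1$ pointwise in $z$, dominated by $1$, so dominated convergence in \R{usol1} yields
\[\lim_{t\to 0^+}u(t) = \tfrac{1}{\Gamma(s)}\int_0^\infty z^{s-1}T(z)(-A)^s u_0\,dz = (-A)^{-s}(-A)^s u_0 = u_0,\]
where the middle equality is the semigroup representation of $(-A)^{-s}$ obtainable from \R{bala}--\R{bala1} (cf.\ \cite{martinez2001theory}) and the last uses $u_0\in D((-A)^s)$. For the conormal derivative, differentiation under the integral gives $u'(t) = -\tfrac{t}{2\Gamma(s)}\int_0^\infty z^{s-2}e^{-t^2/(4z)}T(z)(-A)^s u_0\,dz$, and the substitution $w = t^2/(4z)$ collapses the prefactor to the constant $2^{1-2s}$, producing
\[t^{1-2s}u'(t) = -\tfrac{2^{1-2s}}{\Gamma(s)}\int_0^\infty w^{-s}e^{-w}\,T\!\bigl(\tfrac{t^2}{4w}\bigr)(-A)^s u_0\,dw.\]
Strong continuity of $T(\cdot)$ at zero sends $T(t^2/(4w))(-A)^s u_0 \to (-A)^s u_0$ pointwise in $w$, and dominated convergence against the integrable majorant $w^{-s}e^{-w}\|(-A)^s u_0\|$ (with $\int_0^\infty w^{-s}e^{-w}\,dw = \Gamma(1-s)$) produces $c_s(-A)^s u_0$ with $c_s = 2^{1-2s}\Gamma(1-s)/\Gamma(s)$, up to the sign convention fixed by \R{e2}.

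The central technical hurdle is justifying the interchanges of $\partial_t$ with $\int_0^\infty$ and the integration by parts in $z$, all uniformly on compact subsets of $(0,\infty)$. Both rely on locally uniform integrable majorants for the integrand and its $t$-derivatives; the Gaussian factor $e^{-t^2/(4z)}$ controls the singular behavior as $z\to 0^+$, while Lemma~2.1 controls the tail as $z\to\infty$. Once these majorants are assembled the three steps above combine to yield \R{usol1} and \R{uder}.
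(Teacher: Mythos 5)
Your verification of the ODE is essentially the paper's own computation: differentiate under the integral, observe that $\partial_t^2 g+\tfrac{\alpha}{t}\partial_t g=\tfrac{d}{dz}g$, and integrate by parts in $z$ using $\tfrac{d}{dz}T(z)v=AT(z)v$. Your treatment of the conormal limit, however, is genuinely different and in one respect stronger. The paper integrates by parts twice to reduce $t^{1-2s}u'(t)$ to the Balakrishnan integral $\int_0^\infty z^{-s-1}\left[T(z)u_0-u_0\right]dz$ plus a remainder that it bounds by $2t^{2-2s}\|Au_0\|$; that bound needs $u_0\in D(A)$, which $u_0\in D((-A)^s)$ alone does not supply for $s<1$, so the paper's argument carries an implicit extra regularity hypothesis. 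Your substitution $w=t^2/(4z)$ turns $t^{1-2s}u'(t)$ into $-\tfrac{2^{1-2s}}{\Gamma(s)}\int_0^\infty w^{-s}e^{-w}\,T\!\left(\tfrac{t^2}{4w}\right)(-A)^su_0\,dw$, after which strong continuity of the semigroup at $0$ and the integrable majorant $w^{-s}e^{-w}\|(-A)^su_0\|$ finish the job for every $u_0\in D((-A)^s)$, with no appeal to the Balakrishnan formula and no extra smoothness. Two further remarks. First, your computation produces $-c_s(-A)^su_0$; this is the correct sign (it matches the conormal-derivative convention in $(\ref{e2})$, and indeed $u'(t)<0$ near $t=0$ since $e^{-t^2/4z}$ decreases in $t$), and the discrepancy with the literal statement $(\ref{uder})$ traces to a sign slip already present in the paper's own passage from the first to the second line of $(\ref{lim1b})$ — you are right to flag the convention rather than force agreement. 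Second, your added verification that $u(0)=u_0$ via $\tfrac{1}{\Gamma(s)}\int_0^\infty z^{s-1}T(z)\,dz=(-A)^{-s}$ is legitimate here because $\mu(A)<0$ makes that integral converge; the paper omits this step entirely, so your proof is, if anything, more complete on the claim that $(\ref{usol1})$ actually solves the initial-value problem and not merely the equation on $(0,\infty)$.
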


\begin{proof}
We first show $(\ref{usol1})$ by demonstrating directly that $(\ref{usol1})$ satisfies $(\ref{b1})$ for $t>0.$ To that end we have
$$u'(t) = \frac{1}{\Gamma(s)}\int_0^\infty \left[\frac{-t}{2z}\right]z^{s-1}e^{-t^2/4z}T(z)(-A)^su_0\,dz$$
and
$$u''(t) = \frac{1}{\Gamma(s)}\int_0^\infty \left[\frac{-1}{2z} + \frac{t^2}{4z^2}\right]z^{s-1}e^{-t^2/4z}T(z)(-A)^su_0\,dz.$$
Thus, for $t>0$ we have
\bbb
u''(t) + \frac{\alpha}{t}u'(t) & = & \frac{1}{\Gamma(s)}\int_0^\infty \left[\frac{s-1}{z} + \frac{t^2}{4z^2}\right] z^{s-1}e^{-t^2/4z}T(z)(-A)^su_0\,dz\nnn\\
& = & \frac{1}{\Gamma(s)}\int_0^\infty \left[\frac{d}{dz}\left(z^{s-1}e^{-t^2/4z}\right)\right]T(z)(-A)^su_0\,dz.\label{lem22a}
\eee
By noting that $\textstyle\frac{d}{dz}T(z)(-A)^su_0 = AT(z)(-A)^su_0$ and employing integration by parts, we have
\bbbb
u''(t) + \frac{\alpha}{t}u'(t) & = & \frac{1}{\Gamma(s)}\int_0^\infty z^{s-1}e^{-t^2/4z}AT(z)(-A)^su_0\,dz\\
& = & -Au(t),
\eeee
which shows that $(\ref{usol1})$ is a solution to $(\ref{b1}).$ 

Next we show $(\ref{uder})$ holds for $u_0\in D((-A)^s).$ Note that
\bbb
t^{1-2s}u'(t) & = & \frac{1}{\Gamma(s)}\int_0^\infty \left(\frac{-t^{2-2s}}{2z}\right)z^{s-1}e^{-t^2/4z}T(z)(-A)^su_0\,dz\nnn\\
& = & \frac{c_st^{-2s}}{4^{-s}\Gamma(1-s)} \int_0^\infty z^{s}\left(\frac{d}{dz}e^{-t^2/4z}\right)T(z)(-A)^su_0\,dz.\label{lim1b}
\eee
By employing integration by parts in $(\ref{lim1b})$ we obtain
\bbb
&&\int_0^\infty z^{s}\left(\frac{d}{dz}e^{-t^2/4z}\right)T(z)(-A)^su_0\,dz\nnn\\ 
&& ~~~~~~~~~~~~~~~~~~~~ = -\int_0^\infty e^{-t^2/4z}\left[sz^{s-1}T(z) - z^sAT(z)\right](-A)^su_0\,dz, \label{lim1c}
\eee
where the boundary terms in $(\ref{lim1c})$ go to zero due to Assumption 2.1. After substituting $(\ref{lim1c})$ into $(\ref{lim1b})$ and employing integration by parts yet again, we obtain
\bbb
t^{1-2s}u'(t) & = & \frac{-c_st^{-2s}}{4^{-s}\Gamma(1-s)}\int_0^\infty e^{-t^2/4z}\left[sz^{s-1}T(z) - z^sAT(z)\right](-A)^su_0\,dz\nnn\\
& = & \frac{c_s}{\Gamma(-s)}\int_0^\infty z^{-s-1}e^{-t^2/4z}T(z)u_0\,dz\nnn\\
&& ~~~~~~~~~~~~~~~ - \frac{c_st^2}{4\Gamma(1-s)}\int_0^\infty z^{-s-2}e^{-t^2/4z}T(z)u_0\,dz\nnn\\
& = & \frac{c_s}{\Gamma(-s)}\int_0^\infty z^{-s-1}e^{-t^2/4z}\left[T(z)u_0-u_0\right]\,dz\nnn\\
&& ~~~~~~~~~~~~~~~ - \frac{c_st^2}{4\Gamma(1-s)}\int_0^\infty z^{-s-2}e^{-t^2/4z}\left[T(z)u_0 - u_0\right]\,dz.\label{lim2}
\eee
Note that the first integral in $(\ref{lim2})$ is Bochner integrable and is in fact the desired result, as
\bbb
\lim_{t\to 0^+}\frac{c_s}{\Gamma(-s)}\int_0^\infty z^{-s-1}e^{-t^2/4z}\left[T(z)u_0-u_0\right]\,dz & = & \frac{c_s}{\Gamma(-s)}\int_0^\infty z^{-s-1}\left[T(z)u_0-u_0\right]\,dz\nnn\\
& = & c_s(-A)^su_0\label{lim3}
\eee
by Definition 2.1 and $(\ref{bala1}).$ Thus, $(\ref{uder})$ follows if the second integral in $(\ref{lim2})$ goes to zero. By Taylor expanding T(z) about $z=0$ and employing Assumption 2.1, we have
\bbb
\left\|\frac{c_st^2}{4\Gamma(1-s)}\int_0^\infty z^{-s-2}e^{-t^2/4z}\left[T(z)u_0 - u_0\right]\,dz\right\| & \le & \frac{c_st^2\|Au_0\|}{4\Gamma(1-s)}\int_0^\infty z^{-s-1}e^{-t^2/4z}\,dz\nnn\\
& = & \frac{c_st^{2-2s}\|Au_0\|}{4^{-s}\Gamma(1-s)}\int_0^\infty y^{s-1}e^{-y}\,dy\nnn\\
& = & 2t^{2-2s}\|Au_0\|.\label{lim4}
\eee
Since $D(A)\subseteq D((-A)^s),$ it follows that $\|Au_0\|$ is bounded by assumption and we have
\bb{lim5}
\lim_{t\to 0^+}\left\|\frac{c_st^2}{4\Gamma(1-s)}\int_0^\infty z^{-s-2}e^{-t^2/4z}\left[T(z)u_0 - u_0\right]\,dz\right\| = 0.
\ee
Combining $(\ref{lim3})$ and $(\ref{lim5})$ yields the desired result. 
\end{proof}

\begin{lemma}
For all $u_0 \in X,$ a solution to $(\ref{b1})$ is given by $(\ref{usol1}).$
\end{lemma}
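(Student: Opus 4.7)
The difficulty with $(\ref{usol1})$ as written is that it explicitly invokes $(-A)^s u_0,$ which is only defined for $u_0\in D((-A)^s).$ My plan is to recast $(\ref{usol1})$ in an equivalent form in which the fractional power does not act on $u_0$ directly, yielding an expression that extends to arbitrary $u_0\in X$ and still satisfies $(\ref{b1}).$

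The mechanism for the reformulation is already present in the proof of Theorem 2.1. Using $T(z)(-A)^s u_0 = (-A)^s T(z)u_0,$ the identity $\frac{d}{dz}T(z)u_0 = AT(z)u_0$ on $D(A),$ and integration by parts in $z$---precisely as in the passage from $(\ref{lim1b})$ through $(\ref{lim2})$---the factor $(-A)^s u_0$ can be absorbed into the kernel $z^{s-1}e^{-t^2/4z}.$ Applying this manipulation directly to $(\ref{usol1})$ will produce a Poisson-subordination representation of the schematic form
\[
\tilde{u}(t) = \frac{t^{2s}}{4^s\Gamma(s)}\int_0^\infty z^{-s-1}e^{-t^2/4z}T(z)u_0\,dz,
\]
possibly up to lower-order contributions which vanish under Assumption 2.1. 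Because $e^{-t^2/4z}$ has super-algebraic decay as $z\to 0^+$ and $\|T(z)u_0\|\le e^{z\mu(A)}\|u_0\|$ by Lemma 2.1, the integral defining $\tilde{u}(t)$ converges absolutely for every $u_0\in X$ and every $t>0.$

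Verification that $\tilde{u}$ satisfies $(\ref{b1})$ then follows the first half of the proof of Theorem 2.1: differentiating under the integral sign, the combination $\tilde{u}''(t) + (\alpha/t)\tilde{u}'(t)$ collapses via one integration by parts in $z$ and the identity $\frac{d}{dz}T(z) = AT(z)$ into $-A\tilde{u}(t).$ The initial condition $\tilde{u}(0)=u_0$ follows from the Poisson-kernel limit obtained by the substitution $y = t^2/(4z),$ which turns the integrand into an approximation of the identity; dominated convergence then yields $\tilde{u}(t)\to u_0$ as $t\to 0^+.$ Since the construction guarantees $\tilde{u}(t) = u(t)$ from Theorem 2.1 whenever $u_0\in D((-A)^s),$ the formula $\tilde{u}$ is the natural interpretation of $(\ref{usol1})$ on all of $X.$

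The principal obstacle is the careful justification of the integration by parts used in the reformulation, namely showing that the boundary contributions vanish at both $z\to 0^+$ and $z\to\infty.$ At infinity, $\|T(z)\|\le e^{z\mu(A)}$ from Lemma 2.1, combined with $\mu(A)\le 0$ under Assumption 2.1, controls polynomial prefactors; at zero the Gaussian-type decay of $e^{-t^2/4z}$ dominates any negative power of $z.$ A secondary subtlety is the interchange of the closed operator $(-A)^s$ with the $z$-integral, which is handled by the closedness guaranteed by Definition 2.1.
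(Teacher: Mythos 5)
The paper's own proof of this lemma is a one-line deferral: it cites \cite{meichsner2017fractional} together with the density of $D(A)$ in $X$, and gives no argument. Your proposal therefore does something genuinely different from what is on the page --- it supplies the argument that the citation stands in for, and in outline it matches the strategy of that reference. The central move, reinterpreting $(\ref{usol1})$ as the subordinated Poisson formula $\tilde u(t) = \frac{t^{2s}}{4^s\Gamma(s)}\int_0^\infty z^{-s-1}e^{-t^2/4z}T(z)u_0\,dz$, is the right one: the kernel integrates to $1$ (substitute $y=t^2/4z$), so the expression is defined for every $u_0\in X$; the ODE is checked by differentiating under the integral exactly as in the first half of Theorem 2.1; and the same substitution turns the kernel into an approximate identity, giving $\tilde u(t)\to u_0$ as $t\to 0^+$. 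What your route buys is a self-contained, verifiable statement on all of $X$ that also makes explicit \emph{in what sense} $(\ref{usol1})$ is meant when $(-A)^su_0$ is undefined; what the paper buys by citing is brevity.

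One step in your sketch needs more than you give it: the identification $\tilde u = u$ on $D((-A)^s)$. You cannot obtain it from ``both solve $(\ref{b1})$ with the same datum,'' because uniqueness for $(\ref{b1})$ is exactly what is unavailable in this setting (the paper introduces Assumption 2.2 for that reason). Nor does integration by parts modeled on the passage $(\ref{lim1b})$--$(\ref{lim2})$ do the job on its own: writing $z^{s-1}=\frac{1}{s}\frac{d}{dz}z^s$ and integrating by parts produces terms of the form $A(-A)^su_0$, i.e.\ it raises rather than removes the fractional power acting on $u_0$. The clean way to close the gap is to insert the Bochner--Balakrishnan representation $(-A)^su_0=\frac{1}{\Gamma(-s)}\int_0^\infty w^{-1-s}\left(T(w)u_0-u_0\right)dw$ (or, equivalently, $u_0=(-A)^{-s}(-A)^su_0$ with $(-A)^{-s}=\frac{1}{\Gamma(s)}\int_0^\infty w^{s-1}T(w)\,dw$, valid because $\mu(A)<0$) into one of the two integrals and apply Fubini; as a sanity check, in the scalar case $A=-\lambda$ both expressions reduce to $\frac{2}{\Gamma(s)}\left(t\sqrt{\lambda}/2\right)^sK_s(t\sqrt{\lambda})$. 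With that step supplied, your argument is complete and, unlike the paper's, self-contained.
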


\begin{proof}
This result follows from \cite{meichsner2017fractional} and the fact that $A$ is densely defined in $X.$ 
\end{proof}

It is worth noting that the solution to $(\ref{b1})$ is not known to be unique in the current setting. However, there are numerous results regarding situations in which uniqueness is known, for instance, when $A$ is the Laplacian with Dirichlet boundary conditions, when $A$ is a sectorial operator and $X$ is a Hilbert space, and for all operators $A$ when $s=1/2$ \cite{doi:10.1080/03605302.2017.1363229,Gale2013,meichsner2017fractional,doi:10.1080/03605301003735680}. In order to avoid these issues in our ensuing analysis, we introduce the following assumption.

\begin{assume}
The solution to $(\ref{b1})$ is unique in $X$ for all $u_0\in D((-A)^s),$ and hence, given by $(\ref{usol1}).$
\end{assume}

\begin{rem}
One could replace Assumption 2.2 with the condition that the solution to $(\ref{b1})$ decays to zero at infinity, in some weak sense, and obtained the same result.
\end{rem}

\begin{rem}
It is worth noting that $(\ref{usol1})$ may seem a bit strange as it involves the fractional power of the operator $-A.$ However, as discussed in the Introduction, when considering $(\ref{e1})$ or $(\ref{b1})$ in applications the function $(-A)^su_0$ will be given. 
\end{rem}

We close this section by outlining some basic properties regarding the semigroup generated by $A$ and the operator $(-A)^s.$ For more details, see \cite{henry2006geometric}.

\begin{lemma}
Let $\alpha\ge 0$ and $0\le \gamma \le 1.$ Then there exists a constant $C>0$ such that
\begin{itemize}
\item[i.] $\|(-A)^\alpha T(t)\| \le Ct^{-\alpha},$ for $t>0;$
\item[ii.] $(-A)^\alpha T(t) = T(t)(-A)^\alpha,$ on $D((-A)^\alpha);$
\item[iii.] If $\alpha\ge \gamma,$ then $D((-A)^\alpha\subseteq D((-A)^\gamma).$
\end{itemize}
\end{lemma}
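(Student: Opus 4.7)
The plan is to establish each of the three claims in turn, leaning on the integral representation of fractional powers provided by the Balakrishnan operator $(\ref{bala})$ together with the contraction semigroup properties furnished by Assumption 2.1. Throughout, I would first handle the case $\alpha \in (0,1)$ via the Balakrishnan formula and the identity $(\ref{bala1})$, and then bootstrap to general $\alpha \ge 0$ by writing $\alpha = n + \beta$ with $n \in \NN$ and $\beta \in [0,1)$, invoking the composition/additivity law for fractional powers established in \cite{martinez2001theory}.

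For $(ii)$, I would begin by noting that the semigroup $T(t)$ commutes with each resolvent $(z-A)^{-1}$, $z \in \rho(A)$, since both families belong to the functional calculus of $A$. Plugging this commutation into the integrand of $(\ref{bala})$ shows that $T(t)$ commutes with $J_{-A}^s$ on $D(A)$ for $s \in (0,1)$. Passing to the closure through $(\ref{bala1})$ and using the closedness of $(-A)^s$ then promotes the commutation to all of $D((-A)^s)$. The integer case is immediate from $AT(t) = T(t)A$ on $D(A)$, iterated, and the remaining fractional cases $\alpha = n + \beta$ follow by composing the integer and fractional statements.

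Step $(i)$ is the main obstacle. The desired smoothing estimate $\|(-A)^\alpha T(t)\| \le Ct^{-\alpha}$ hinges on the baseline bound $\|AT(t)\| \le C/t$, which is the analytic/sectorial smoothing estimate implicit in the framework of \cite{henry2006geometric} cited by the lemma (and consistent with the strongly elliptic setting of Example 2.1). Taking this baseline as granted, I would derive the fractional bound by a moment-type argument: for $\alpha \in (0,1)$ and $u \in X$, insert $T(t)u = T(t/2)T(t/2)u$ into $(\ref{bala})$, split the $z$-integral at the natural scale $z_0 = t$, and bound the small-$z$ piece using the contraction property $\|T(z)\| \le 1$ and the large-$z$ piece using $\|AT(z)\| \le C/z$. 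A routine computation then delivers the $t^{-\alpha}$ scaling. For $\alpha = n + \beta$, I would factor $(-A)^\alpha T(t) = (-A)^\beta T(t/2) \cdot A^n T(t/2)$, bound the first factor by the fractional estimate just obtained, and bound $\|A^n T(t/2)\| \le C t^{-n}$ by iterating $AT(t/n) = (AT(t/n))$ so that $A^n T(t) = (AT(t/n))^n$.

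Finally, for $(iii)$, the key observation is that $(-A)^{-(\alpha - \gamma)}$ is a bounded operator on $X$, which follows from the Balakrishnan representation together with the contraction estimate on the resolvent, and, when $0 \in \sigma(A)$, the shifted limiting procedure in Definition 2.1 iii. Given $u \in D((-A)^\alpha)$, I would then write $(-A)^\gamma u = (-A)^{-(\alpha-\gamma)} (-A)^\alpha u$ using the additivity law, which exhibits $(-A)^\gamma u$ as a well-defined element of $X$ and therefore places $u$ in $D((-A)^\gamma)$. The only real subtlety in this last step is again the case $0 \in \sigma(A)$, handled by the standard shift-and-pass-to-the-limit argument already baked into Definition 2.1.
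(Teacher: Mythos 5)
The paper offers no proof of this lemma at all: it is stated as a collection of standard facts and deferred wholesale to \cite{henry2006geometric}. Your proposal therefore necessarily takes a ``different route'' in the trivial sense that it supplies an argument where the paper supplies a citation. The skeleton you give is the standard one and is essentially sound: (ii) by commuting $T(t)$ with the resolvents in the Balakrishnan integral and passing to the closure via $(\ref{bala1})$; (i) by a moment-type splitting of the integral representation at a scale comparable to $t$ (note that whether the correct cut is at $z_0=t$ or $z_0=1/t$ depends on which representation you use --- with the resolvent form $(\ref{bala})$ the split goes at $z_0=1/t$, with the semigroup form $\int_0^\infty z^{-\alpha-1}(T(z)-I)\,dz$ it goes at $z_0=t$; your description mixes the two, but the idea is right); and the reduction of general $\alpha=n+\beta$ to the cases $n\in\NN$ and $\beta\in(0,1)$ via additivity.

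Two points deserve to be made explicit. First, and most importantly, item (i) is \emph{not} a consequence of Assumption 2.1. Strict $m$-dissipativity yields only a contraction semigroup, and the baseline estimate $\|AT(t)\|\le Ct^{-1}$ on which your whole argument for (i) rests is equivalent to analyticity of the semigroup; there exist strictly dissipative generators of non-analytic contraction semigroups. You are right to identify this bound as the crux and honest in ``taking it as granted,'' but as written it is an unproved hypothesis, not a lemma: the clean fix is to state that (i) requires $A$ to be sectorial (the standing hypothesis of \cite{henry2006geometric}), which is an additional assumption the paper itself silently imports. Second, your proof of (iii) leans on the boundedness of $(-A)^{-(\alpha-\gamma)}$, which fails in general when $0\in\sigma(A)$ and is not rescued merely by the shift-and-limit procedure of Definition 2.1(iii). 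The inclusion $D((-A)^\alpha)\subseteq D((-A)^\gamma)$ follows more robustly from the additivity law itself, $(-A)^\alpha u=(-A)^{\alpha-\gamma}(-A)^\gamma u$ for $u\in D((-A)^\alpha)$, which already presupposes $u\in D((-A)^\gamma)$, or from the moment inequality $\|(-A)^\gamma u\|\le C\|(-A)^\alpha u\|^{\gamma/\alpha}\|u\|^{1-\gamma/\alpha}$; neither requires inverting any fractional power.
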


\section{Derivation of Approximation} \clearallnum

The properties of various forms of $(\ref{usol1})$ have been studied by several authors, see \cite{doi:10.1080/03605302.2017.1363229,Gale2013,meichsner2017fractional,doi:10.1080/03605301003735680} and the references therein. However, in this paper, we are concerned with the development and analysis of an approximation to $(\ref{usol1}).$ To that end, fix $0\ll M< N < \infty$ with $M,N\in\NN.$ Then we define $h\mathrel{\mathop:}=M/(N+1)$ and set $z_k\mathrel{\mathop:}= kh,\ k=0,\ldots,N+1.$ For the sake of the ensuing analysis, we define
\bb{a1}
u^{(M)}(t)\mathrel{\mathop:}= \frac{1}{\Gamma(s)}\int_0^M z^{s-1}e^{-t^2/4z}T(z)(-A)^su_0\,dz.
\ee
We then have the following result.

\begin{lemma}
Let $u$ and $u^{(M)}$ be defined as in $(\ref{usol1})$ and $(\ref{a1}),$ respectively. Then, if $u_0\in D((-A)^s),$ we have
$$\|u(t) - u^{(M)}(t)\| \le C
M^{s-1}e^{\mu(A)M}\|(-A)^su_0\|,\quad t\in [0,\infty),$$
where $C$ is a constant independent of $s$ and $M.$ 
\end{lemma}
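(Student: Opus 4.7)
The plan is to identify $u(t)-u^{(M)}(t)$ explicitly and then dominate the tail integral pointwise. Since both $u$ and $u^{(M)}$ have the same integrand and differ only in the domain of integration, subtracting $(\ref{a1})$ from $(\ref{usol1})$ gives
\bbbb
u(t)-u^{(M)}(t) = \frac{1}{\Gamma(s)}\int_M^\infty z^{s-1}e^{-t^2/4z}T(z)(-A)^s u_0\,dz,
\eeee
so the task reduces to estimating this Bochner integral in norm.

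Next I would pull the norm inside the integral and apply three pointwise bounds. First, since $t\ge 0$ we trivially have $e^{-t^2/4z}\le 1$, which removes the $t$-dependence (and explains why the bound holds uniformly in $t$). Second, by Lemma 2.1 the semigroup satisfies $\|T(z)\|\le e^{z\mu(A)}$, and Assumption 2.1 combined with Example 2.2 shows $\mu(A)<0$, so this factor gives the exponential decay in $M$. Third, because $0<s<1$ the exponent $s-1$ is negative, hence $z\mapsto z^{s-1}$ is decreasing and $z^{s-1}\le M^{s-1}$ for every $z\ge M$. Pulling $M^{s-1}\|(-A)^s u_0\|$ outside yields the remaining scalar integral
\bbbb
\int_M^\infty e^{z\mu(A)}\,dz = \frac{1}{-\mu(A)}e^{M\mu(A)},
\eeee
producing the advertised factor $M^{s-1}e^{\mu(A)M}$.

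Combining these estimates gives a bound with prefactor $\bigl[\Gamma(s)(-\mu(A))\bigr]^{-1}$. The only subtlety, and the sole reason one needs to be a little careful, is the claim that $C$ is independent of $s$: one checks that $\Gamma(s)\ge \Gamma(1)=1$ for $s\in(0,1)$ since $\Gamma$ is decreasing on that interval, so $1/\Gamma(s)\le 1$ uniformly in $s$. Absorbing the remaining $s$-independent factor $1/(-\mu(A))$ into the constant $C$ finishes the proof. I do not anticipate any real obstacle beyond this uniformity check, since every step is a direct application of results already established in the excerpt.
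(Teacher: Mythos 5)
Your proposal is correct, and it follows the paper's overall structure (identify the difference as the tail integral over $[M,\infty)$, bound $e^{-t^2/4z}\le 1$, and invoke Lemma 2.1 together with $\mu(A)<0$ for the semigroup factor), but it handles the decisive scalar integral $\int_M^\infty z^{s-1}e^{\mu(A)z}\,dz$ in a genuinely more elementary way. The paper substitutes $w=-\mu(A)z$ and then $y=w+\mu(A)M$ to write the incomplete Gamma integral as $(-\mu(A))^{-1}M^{s-1}e^{\mu(A)M}\bigl[1+R(M)\bigr]$ with an explicit remainder satisfying $\|R(M)\|\le \Gamma(2-s)/(|\mu(A)|M\Gamma(s))$; this is essentially a Watson-lemma-style expansion whose leading term is the advertised bound. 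You instead use monotonicity, $z^{s-1}\le M^{s-1}$ for $z\ge M$ since $s-1<0$, and integrate $e^{\mu(A)z}$ exactly, obtaining the clean bound $M^{s-1}e^{\mu(A)M}/\bigl((-\mu(A))\Gamma(s)\bigr)$ in one line. Your route is shorter, avoids the remainder bookkeeping, and still yields the asymptotically sharp constant $(-\mu(A))^{-1}$; what the paper's expansion buys is an explicit quantification of how close the tail integral is to that leading-order value, which is not needed for the stated lemma. Your uniformity check $1/\Gamma(s)\le 1$ on $(0,1)$ is correct and is the right thing to verify for the claim that $C$ is independent of $s$ (note that $C$ still depends on $\mu(A)$, exactly as the paper's Remark 3.1 acknowledges). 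No gaps.
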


\begin{proof}
We proceed by direct calculation. We have
\bbb
\left\|u(t) - u^{(M)}(t)\right\| & = & \frac{1}{\Gamma(s)}\left\|\int_0^\infty z^{s-1}e^{-t^2/4z}T(z)(-A)^su_0\,dz\right.\nnn\\
&& ~~~~~~~~~~~~~~~~~~~~ \left. - \int_0^M z^{s-1}e^{-t^2/4z}T(z)(-A)^su_0\,dz\right\|\nnn\\
& = & \frac{1}{\Gamma(s)}\left\|\int_M^\infty z^{s-1}e^{-t^2/4z}T(z)(-A)^su_0\,dz\right\|\nnn\\
& \le & \frac{e^{-t^2/4M}\|(-A)^su_0\|}{\Gamma(s)}\int_M^\infty z^{s-1}e^{\mu(A)z}\,dz\nnn\\
& \le & \frac{\|(-A)^su_0\|}{\Gamma(s)}\int_M^\infty z^{s-1}e^{\mu(A)z}\,dz,\label{lemma1a}
\eee
where $\mu(A)<0$ by Assumption 2.1 and is defined in $(\ref{lognorm}).$
It now remains to bound the integral in $(\ref{lemma1a}).$ To that end, let $w = -\mu(A)z,$ to obtain
\bb{1int1}
\int_M^\infty z^{s-1}e^{\mu(A)z}\,dz = \left(-\mu(A)\right)^{-s}\int_{-\mu(A)M}^\infty w^{s-1}e^{-w}\,dw.
\ee
Employing the substitution $y = w + \mu(A)M$ in $(\ref{1int1})$ then yields
\bb{1int2}
\left(-\mu(A)\right)^{-s}\int_{-\mu(A)M}^\infty w^{s-1}e^{-w}\,dw = \left(-\mu(A)\right)^{-1}M^{s-1}e^{\mu(A)M}\left[1 + R(M)\right],
\ee
where
$$R(M)\mathrel{\mathop:}= \frac{\Gamma(2-s)}{\mu(A)M\Gamma(s)}\int_0^\infty ye^{-y}\left(\int_0^1 \left(1 - \frac{y\xi}{\mu(A)M}\right)^{s-2}\,d\xi\right)\,dy.$$
Since $s\in (0,1),$ we have
\bb{1int3}
\|R(M)\| \le \frac{\Gamma(2-s)}{|\mu(A)|M\Gamma(s)}\int_0^\infty\int_0^1 ye^y\,d\xi\,dy = \frac{\Gamma(2-s)}{|\mu(A)|M\Gamma(s)}.
\ee
By substituting $(\ref{1int1})$-$(\ref{1int3})$ into $(\ref{lemma1a}),$ we obtain
$$\left\|u(t) - u^{(M)}(t)\right\| \le CM^{s-1}e^{\mu(A)M}\|(-A)^su_0\|.$$
\end{proof}

\begin{rem}
While the constant, $C,$ in Lemma 3.1 is independent of $s,$ $t,$ and $M,$ it does depend on $\mu(A).$ Hence, in practice, if $\mu(A)$ is close to zero we will need to increase the value of $M$ to obtain the desired accuracy.
\end{rem}

Now that it has been established that the truncation of the integral will not have drastic effects on the approximation of $(\ref{usol1}),$ for large $M,$ we present the proposed approximation to $u(t)$ in $X.$ For $u_0\in D((-A)^s),$ we define the following continuous in time approximation to $(\ref{usol1})$
\bb{approx}
v(t)\mathrel{\mathop:}= \sum_{k=0}^N \gamma_k(t) S^k(h)S(h/2)(-A)^su_0,
\ee
where
$$
\gamma_k(t)\mathrel{\mathop:}= \frac{h^s[(k+1)^s-k^s]}{\Gamma(1+s)}e^{-t^2/4z_{k+1/2}}\quad\mbox{and}\quad
S(h)\mathrel{\mathop:}= \left(I-\frac{h}{2}A\right)^{-1}\left(I+\frac{h}{2}A\right),
$$
with $z_{k+1/2}\mathrel{\mathop:}= z_k + h/2.$ 

\begin{assume}
Let the operator $S(w)$ be nonexpansive in $X.$
\end{assume}

\begin{rem}
Note that if $X$ is in fact a Hilbert space, then the assumption that $S(w)$ is nonexpansive will always be valid due to Assumption 2.1. That is, let $v\in D(A),$ then
\bbbb
\left\|\left(I+\frac{w}{2}A\right)v\right\|^2 & = & \|v\|^2 + w\mbox{Re}[ Av,v] + \frac{w^2}{4}\|Av\|^2\\
& \le & \|v\|^2 - w\mbox{Re}[ Av,v] + \frac{w^2}{4}\|Av\|^2\\
& = & \left\|\left(I - \frac{w}{2}A\right)v\right\|^2,
\eeee
where $[ \cdot,\cdot]$ is the inner product on the Hilbert space $X.$ However, in general Assumption 3.1 does not hold in general Banach spaces, $X,$ with $A$ satisfying Assumption 2.1. For more details see \cite{barbu2010nonlinear}.
\end{rem}

We begin by demonstrating that $(\ref{approx})$ is stable in $X.$

\begin{lemma}
Let $u_0\in D((-A)^s)$ and $v(t)$ be as defined in $(\ref{approx}).$ Then we have
$$\|v(t)\| \le C\|(-A)^su_0\|,\quad t\in [0,\infty),$$
where $C$ is a constant independent of $h$ and $k=0,\ldots,N.$
\end{lemma}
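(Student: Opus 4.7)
The plan is to apply the triangle inequality to the definition of $v(t)$ in $(\ref{approx})$, then use Assumption 3.1 to handle the operator factors, and finally carry out a telescoping estimate on the scalar weights $\gamma_k(t)$.

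First, by the triangle inequality in $X$,
\bbbb
\|v(t)\| \le \sum_{k=0}^N |\gamma_k(t)|\,\left\|S^k(h)S(h/2)(-A)^s u_0\right\|.
\eeee
Since Assumption 3.1 states that $S(w)$ is nonexpansive on $X$ for every admissible $w>0$, each of the factors $S^k(h)$ and $S(h/2)$ has operator norm at most $1$, so
\bbbb
\left\|S^k(h)S(h/2)(-A)^s u_0\right\| \le \|(-A)^s u_0\|,\qquad k=0,\ldots,N.
\eeee

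Next I would control the coefficients. Because $e^{-t^2/4z_{k+1/2}}\le 1$ for all $t\ge 0$, we have
\bbbb
\sum_{k=0}^N |\gamma_k(t)| \le \frac{h^s}{\Gamma(1+s)}\sum_{k=0}^N \bigl[(k+1)^s - k^s\bigr].
\eeee
The sum on the right telescopes to $(N+1)^s$, and using $h(N+1)=M$ from the definition $h = M/(N+1)$, this yields
\bbbb
\sum_{k=0}^N |\gamma_k(t)| \le \frac{h^s (N+1)^s}{\Gamma(1+s)} = \frac{M^s}{\Gamma(1+s)}.
\eeee
Combining the two estimates gives $\|v(t)\|\le C \|(-A)^s u_0\|$ with $C := M^s/\Gamma(1+s)$, which depends only on the fixed truncation parameter $M$ and on $s$, and in particular is independent of $h$ and $k$.

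There is no serious obstacle here: the proof is essentially a bookkeeping argument that splits the stability estimate into an operator part (handled by the nonexpansiveness Assumption 3.1) and a scalar part (handled by telescoping). The one subtle point worth flagging is that the constant $C$ grows like $M^s$, so the stability bound is uniform in $h$ and $t$ but not in the truncation level $M$; this is consistent with Remark 3.1 and with the truncation error in Lemma 3.1, which is the natural place where $M$ enters.
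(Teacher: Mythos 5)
Your proof is correct. The first half (triangle inequality plus Assumption 3.1 to reduce everything to $\sum_{k}|\gamma_k(t)|$) is exactly what the paper does. Where you diverge is in bounding the scalar sum: you drop the exponential factors via $e^{-t^2/4z_{k+1/2}}\le 1$ and then telescope $\sum_{k=0}^N[(k+1)^s-k^s]=(N+1)^s$ exactly, giving the explicit constant $C=M^s/\Gamma(1+s)$. The paper instead applies the mean value theorem to write $h^s[(k+1)^s-k^s]/\Gamma(1+s)=h\,\xi_k^{s-1}/\Gamma(s)$ with $\xi_k\in(z_k,z_{k+1})$, treats the result as a Riemann-type sum for $\int_0^M z^{s-1}e^{-t^2/4z}\,dz$, and splits it at $z=1$ to handle the singular part and the tail separately. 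Your route is shorter, entirely elementary, and yields a cleaner explicit constant; the paper's route is messier but is structured so as to retain an $e^{-t^2/4}$ decay factor in part of the estimate, which hints at the temporal decay of $v(t)$ even though the lemma as stated does not require it. Both arguments produce a constant that depends on the truncation level $M$ (yours like $M^s$, the paper's roughly like $M$), and you are right to flag this: the lemma only claims independence of $h$ and $k$, so this is consistent with the statement and with Remark 3.1.
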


\begin{proof}
Direct calculation yields
\bb{st1}
\|v(t)\| \le \sum_{k=0}^N \|\gamma_k(t)\|\left\|S(h)\right\|^k\left\|S(h/2)\right\|\|(-A)^su_0\| \le \sum_{k=0}^N \|\gamma_k(t)\|\|(-A)^su_0\|,
\ee
by Assumption 3.1. Thus it we need only consider the $\gamma_k(t).$ We first note that since $\gamma_k(t)\ge 0$ for $t\ge 0,$ we have
\bbb
\sum_{k=0}^N \|\gamma_k(t)\| & = & \frac{h^s}{\Gamma(1+s)}\sum_{k=0}^{N}[(k+1)^s-k^s]e^{-t^2/4z_{k+1/2}}\nnn\\ 
& = & \frac{h}{\Gamma(s)}\sum_{k=0}^N \xi_k^{s-1}e^{-t^2/4z_{k+1/2}},\label{new1}
\eee
for some $\xi_k\in(z_k,z_{k+1}),\ k=0,...,N.$ Now let $\beta\in\NN$ be the index such that $z_\beta \le 1 < z_{\beta+1}.$ We then have, by $(\ref{new1}),$
\bbbb
\sum_{k=0}^N \|\gamma_k(t)\| & = & \frac{h}{\Gamma(s)}\sum_{k=0}^{N}\xi_k^{s-1}e^{-t^2/4z_{k+1/2}}\nnn\\ 
& = & \frac{h}{\Gamma(s)}\sum_{k=0}^{\beta-1} \xi_k^{s-1}e^{-t^2/4z_{k+1/2}} + \frac{h}{\Gamma(s)}\sum_{k=\beta}^{N} \xi_k^{s-1}e^{-t^2/4z_{k+1/2}}\nnn\\
& \le & \frac{h}{\Gamma(s)}\sum_{k=0}^{\beta-1} \xi_k^{s-1}e^{-t^2/4z_{k+1/2}} + \frac{h}{\Gamma(s)}\sum_{k=\beta}^{N} e^{-t^2/4z_{k+1/2}}\label{st1d}
\eeee
By noting that
$$\frac{h}{\Gamma(s)}\sum_{k=0}^{\beta-1} z_{k+1/2}^{s-1}e^{-t^2/4z_{k+1/2}} ~~\le~~ \frac{h}{\Gamma(s)}\sum_{k=0}^{\beta-1} e^{-t^2/4} ~~\le~~ e^{-t^2/4},$$
we then obtain
\bb{st1e}
\frac{h}{\Gamma(s)}\sum_{k=0}^{N}\xi_k^{s-1}e^{-t^2/4z_{k+1/2}} ~~\le~~ e^{-t^2/4} + \frac{h}{\Gamma(s)}\sum_{k=\beta + 1}^{N} e^{-t^2/4z_{k+1/2}} ~~\le~~ C,
\ee
where $C$ is a constant independent of $h$ and $k=0,\ldots,N.$ Substituting $(\ref{st1e})$ into $(\ref{st1})$ completes the proof.
\end{proof}

\section{Convergence} \clearallnum

We now present several convergence estimates that will be necessary for the main result of this section. 

\begin{lemma}
Let $u_0\in D((-A)^{\ell+s}),\ \ell=1,2.$ Then we have
$$\left\|\frac{1}{\Gamma(s)}\int_{z_k}^{z_{k+1}} z^{s-1}e^{-t^2/4z}T(z)(-A)^su_0\,dz - \gamma_k(t)T(z_{k+1/2})(-A)^su_0\right\| \le Ch^{\ell+s},$$
for $t\in [0,\infty),$
where $C$ is a constant independent of $h$ and $k=0,...,N.$
\end{lemma}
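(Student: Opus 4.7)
The plan is to recognize that the estimator is a weighted midpoint-rule approximation. Using $\int_{z_k}^{z_{k+1}} z^{s-1}\,dz = h^s[(k+1)^s-k^s]/s$, one can rewrite
$$\gamma_k(t)\,T(z_{k+1/2})(-A)^s u_0 = \frac{1}{\Gamma(s)}\int_{z_k}^{z_{k+1}} z^{s-1}\,e^{-t^2/4z_{k+1/2}}\,T(z_{k+1/2})(-A)^s u_0\,dz,$$
so the quantity to bound becomes $\frac{1}{\Gamma(s)}\int_{z_k}^{z_{k+1}} z^{s-1}\bigl[F(z)-F(z_{k+1/2})\bigr]\,dz$ with $F(z):=e^{-t^2/4z}T(z)(-A)^s u_0$. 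My strategy is to Taylor-expand $F$ around $z_{k+1/2}$ to order $\ell$ and invoke the regularity $u_0\in D((-A)^{\ell+s})$ to control the resulting remainders.

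For $\ell=1$, the fundamental theorem of calculus yields $F(z)-F(z_{k+1/2}) = \int_{z_{k+1/2}}^z F'(w)\,dw$, and a direct computation using Lemma 2.2(ii) together with $\tfrac{d}{dw}T(w)=AT(w)$ on $D(A)$ gives
$$F'(w) = \tfrac{t^2}{4w^2}\,e^{-t^2/4w}\,T(w)(-A)^s u_0 - e^{-t^2/4w}\,T(w)(-A)^{1+s}u_0.$$
The second summand is uniformly controlled by $\|(-A)^{1+s}u_0\|$ via contractivity of $T$, and contributes an $O(h^{s+1})$ term after combining $|z-z_{k+1/2}|\le h$ with $\int_{z_k}^{z_{k+1}} z^{s-1}\,dz \le Ch^s$. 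For the first summand I will exploit the identity
$$\int_{z_k}^{z_{k+1}}\tfrac{t^2}{4w^2}\,e^{-t^2/4w}\,dw = e^{-t^2/4z_{k+1}} - e^{-t^2/4z_k},$$
obtained by recognizing the integrand as $\tfrac{d}{dw}e^{-t^2/4w}$; the right-hand side is bounded and, together with the weight integral, produces the claimed $O(h^{s+1})$.

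For $\ell=2$, I continue the Taylor expansion one further order,
$$F(z) = F(z_{k+1/2}) + F'(z_{k+1/2})(z-z_{k+1/2}) + \tfrac{1}{2}F''(\eta_z)(z-z_{k+1/2})^2,$$
and estimate the weighted moments $M_j(k) := \int_{z_k}^{z_{k+1}} z^{s-1}(z-z_{k+1/2})^j\,dz$ directly: $M_2(k) = O(h^{s+2})$, while $M_1(k)$ fails to vanish (the weight $z^{s-1}$ spoils midpoint symmetry) but satisfies $|M_1(k)| = O(h^{s+1})$. The apparent deficit in $M_1$ is absorbed by performing a finer one-order expansion on the leading factor $F'(z_{k+1/2})$, which supplies the missing power of $h$; the second-derivative remainder is handled with the same tools as for $F'$ in the $\ell=1$ case, this time picking up the norm $\|(-A)^{2+s}u_0\|$.

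The central technical obstacle is the singular behavior of the derivatives of $e^{-t^2/4w}$ at $w=0$: each successive derivative brings an extra factor $w^{-2}$, none of which is bounded uniformly in $(t,w)$. The remedy is never to estimate these factors pointwise; instead I will keep them under the integral sign and use the identity above (and its higher analogues via the substitution $y=t^2/(4w)$, which converts the relevant integrals into incomplete Gamma integrals). The case $k=0$, in which $z_0=0$ lies on the singularity, is the most delicate: there the identity degenerates to $e^{-t^2/4h}$ and the weight integral $\int_0^h z^{s-1}\,dz=h^s/s$ remains finite, so the argument still goes through. The resulting constant $C$ depends on $s$, $\mu(A)$, $\|(-A)^{\ell+s}u_0\|$, and possibly $t$, but is independent of $h$ and $k$ as the statement requires.
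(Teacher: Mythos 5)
Your overall architecture is the same as the paper's: rewrite $\gamma_k(t)T(z_{k+1/2})(-A)^su_0$ as $\frac{1}{\Gamma(s)}\int_{z_k}^{z_{k+1}}z^{s-1}F(z_{k+1/2})\,dz$ and Taylor-expand $F$ about the midpoint, controlling weighted moments of $z^{s-1}$. You are right, and more candid than the paper's own proof (which silently discards the first-moment term and bounds the second derivative pointwise), that the two genuine difficulties are the non-vanishing of $\int_{z_k}^{z_{k+1}}z^{s-1}(z-z_{k+1/2})\,dz$ and the singularity of the derivatives of $e^{-t^2/4w}$. The gap is that your remedy for the second difficulty does not produce the claimed power of $h$. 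Your identity gives $\int_{z_k}^{z_{k+1}}\frac{t^2}{4w^2}e^{-t^2/4w}\,dw=e^{-t^2/4z_{k+1}}-e^{-t^2/4z_k}$, which is only $O(1)$ uniformly in $t$; paired with the weight integral $\int_{z_k}^{z_{k+1}}z^{s-1}\,dz=O(h^s)$ this yields $O(h^s)$, one full power of $h$ short of the claimed $O(h^{1+s})$. You would need the increment of $e^{-t^2/4(\cdot)}$ across the interval to be $O(h)$, and it is not in the regime you yourself flag as delicate: for $k=0$ and $t\asymp\sqrt h$ it equals $e^{-t^2/4h}\asymp1$. This is not a fixable bookkeeping issue if you want $C$ independent of $t$: taking the scalar semigroup $T(z)=e^{-\lambda z}$, setting $t=\sqrt{2h}$, and substituting $z=h\zeta$, the $k=0$ error becomes $\frac{h^s}{\Gamma(s)}\int_0^1\zeta^{s-1}\bigl(e^{-1/(2\zeta)}-e^{-1}\bigr)\,d\zeta+o(h^s)$, and that integral is a nonzero constant, so the quantity to be bounded is genuinely $\Theta(h^s)$ there. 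Your sentence ``the argument still goes through'' is asserted exactly where it fails.

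If, as your final sentence allows, the constant may depend on $t$, then the estimate (for $\ell=1$) is recoverable, but by the opposite strategy to the one you propose: the pointwise bound $\sup_{w>0}\frac{t^2}{4w^2}e^{-t^2/4w}\le Ct^{-2}$ together with contractivity of $T(\cdot)$ gives $\|F'\|\le C(t)+\|(-A)^{1+s}u_0\|$ on every subinterval, and the crude estimate $\|F(z)-F(z_{k+1/2})\|\le h\sup\|F'\|$ already finishes; the detour through exact antiderivatives is then unnecessary. You should decide which of the two statements (uniform or non-uniform in $t$) you are proving, since only the $t$-dependent one is attainable by these means. Separately, your $\ell=2$ fix for the first moment (``a finer one-order expansion on the leading factor $F'(z_{k+1/2})$'') is not meaningful as written, since $F'(z_{k+1/2})$ is a fixed vector rather than a function of $z$; the workable version is to write $M_1(k)=\int_{z_k}^{z_{k+1}}\bigl(z^{s-1}-z_{k+1/2}^{s-1}\bigr)(z-z_{k+1/2})\,dz$ and apply the mean value theorem to the weight, which again needs separate care at $k=0$, and the $F''$ remainder inherits the same small-$t$ obstruction described above.
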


\begin{proof}
We begin by defining
\bb{ffff}
f(z)\mathrel{\mathop:}= e^{-t^2/4z}T(z)(-A)^su_0,
\ee
for all $t\in [0,\infty).$ Thus, by employing $(\ref{ffff}),$ we have by direct calculation
\bbb
&&\int_{z_k}^{z_{k+1}} z^{s-1}f(z)\,dz\nnn\\ 
&& ~~~~~~~~~~ = \int_{z_k}^{z_{k+1}} z^{s-1}\left[f(z_{k+1/2}) + f'(z_{k+1/2})(z-z_{k+1/2}) + \frac{f''(\xi_k)}{2}(z-z_{k+1/2})^2\right]\,dz \nnn\\
&& ~~~~~~~~~~ = \frac{z_{k+1}^s - z_k^s}{s}f(z_{k+1/2}) + \frac{f''(\xi_k)}{2}\int_{z_k}^{z_{k+1}} z^{s-1}(z-z_{k+1/2})^2\,dz,\label{ffff1}
\eee
for some $\xi_k\in(z_k,z_{k+1}).$ By noting that
$$\frac{z_{k+1}^s - z_k^s}{s}f(z_{k+1/2}) = \Gamma(s)\gamma_k(t)T(z_{k+1/2})(-A)^su_0,$$
it is clear from $(\ref{ffff1})$ that it only remains to show
$$\left\|\frac{f''(\xi_k)}{2}\int_{z_k}^{z_{k+1}} z^{s-1}(z-z_{k+1/2})^2\,dz\right\| \le Ch^{\ell+s+1},$$
for $\ell = 1,2.$ We first note that, by Hille's theorem, we have
\bb{ffff2}
f''(z) = \left(\frac{t^2}{4z^2}I - A\right)^2e^{-t^2/4z}T(z)(-A)^su_0 - \frac{t^2}{4z^3}e^{-t^2/4z}T(z)(-A)^su_0.
\ee
From $(\ref{ffff2}),$ Assumption 2.1, and Lemma 2.3, we have
\bb{fffff}
\|f''(z)\| \le t^{\ell-2},
\ee
since $z^{-\eta}e^{-t^2/4z}$ is bounded for $z\in (0,\infty)$ and $\eta = 2,3,4.$ 

We next note that
\bb{ffff3}
\left\|\int_{z_k}^{z_{k+1}} z^{s-1}(z-z_{k+1/2})^2\,dz\right\| ~~\le~~ \frac{h^2}{4}\int_{z_k}^{z_{k+1}} z^{s-1}\, dz ~~=~~ \frac{h^{2+s}[(k+1)^s-k^s]}{4s}.
\ee
Combining $(\ref{fffff})$ and $(\ref{ffff3})$ into $(\ref{ffff1})$ then yields
\bbbb
&& \left\|\frac{1}{\Gamma(s)}\int_{z_k}^{z_{k+1}} z^{s-1}e^{-t^2/4z}T(z)(-A)^su_0\,dz - \gamma_k(t)T(z_{k+1/2})(-A)^su_0\right\|\\
&& ~~~~~~~~~~~~~~~~~~~~~~~~~~~~~~ = \left\|\frac{f''(\xi_k)}{\Gamma(s)2}\int_{z_k}^{z_{k+1}} z^{s-1}(z-z_{k+1/2})^2\,dz\right\|\\
&& ~~~~~~~~~~~~~~~~~~~~~~~~~~~~~~ \le \frac{\xi_k^{\ell-2}h^{2+s}[(k+1)^s-k^s]}{8\Gamma(1+s)}\\
&& ~~~~~~~~~~~~~~~~~~~~~~~~~~~~~~ \le Ch^{\ell+s},
\eeee
where $\ell=1,2$ for $u_0\in D((-A)^{\ell+s}),$ respectively, and $C$ is a constant independent of $h$ and $k=0,...,N.$ 
\end{proof}

We now define a family of operators which will be employed in the proof of the following lemma.
Let $\Lambda_k\,:\,[0,\infty)\to B(X)$ be defined as
\bb{lam1}
\Lambda_k(z) \mathrel{\mathop:}= z^{-k}\int_0^z T(z-y)\frac{y^{k-1}}{(k-1)!}\,dy,\quad k\ge 1,
\ee
with $\Lambda_0(z)\mathrel{\mathop:}= T(z).$ The $\Lambda_k$ satisfy the following recurrence relation
\bb{lam2}
\Lambda_k(z) = \frac{1}{k!}I + zA\Lambda_{k+1}(z),\quad k\ge 0.
\ee

\begin{lemma}
Let $u_0\in D((-A)^{\ell+s}),\ \ell = 1,2.$ Then we have
$$\left\|T(h)(-A)^su_0 - S(h)(-A)^su_0\right\| \le Ch^{\ell+1},$$
where $C$ is a constant independent of $h$ and $k=0,\ldots,N.$ 
\end{lemma}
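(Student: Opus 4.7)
The strategy is to exploit the fact that $S(h)$ is the $(1,1)$-Padé approximation of $T(h)=e^{hA}$, so that $T(h)$ and $S(h)$ share Taylor coefficients through the quadratic term and their difference is formally $O(h^3)$. I would first derive matching expansions. Iterating $(\ref{lam2})$ starting from $T(h)=\Lambda_0(h)$ gives
\bbbb
T(h)v = v + hAv + \tfrac{h^2}{2}A^2v + h^3A^3\Lambda_3(h)v,
\eeee
and iterating the resolvent identity $R_h = I + \tfrac{h}{2}AR_h$ (with $R_h := (I - \tfrac{h}{2}A)^{-1}$) in $S(h) = -I + 2R_h$ yields
\bbbb
S(h)v = v + hAv + \tfrac{h^2}{2}A^2v + \tfrac{h^3}{4}A^3R_h v.
\eeee
Subtraction produces the key identity
\bbbb
(T(h) - S(h))v = h^3 A^3 \bigl[\Lambda_3(h) - \tfrac{1}{4}R_h\bigr] v.
\eeee

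Setting $v=(-A)^s u_0$ and invoking Lemma 2.3 (ii) to commute $A^\ell$ onto $v$ gives $(T(h)-S(h))v = h^3 A^{3-\ell}[\Lambda_3(h)-\tfrac{1}{4}R_h](-A)^{\ell+s}u_0$, and the task reduces to bounding $h^3\,\|A^{3-\ell}[\Lambda_3(h)-\tfrac{1}{4}R_h]\|\le Ch^{\ell+1}$. I would carry this out using three ingredients: the bounded-operator identity $AR_h=\tfrac{2}{h}(R_h-I)$; the integral form $\Lambda_3(h)=h^{-3}\int_0^h T(h-y)\tfrac{y^2}{2}dy$ paired with the smoothing estimate $\|(-A)^\alpha T(t)\|\le Ct^{-\alpha}$ from Lemma 2.3 (i); and the leading cancellation $\Lambda_3(0)-\tfrac{R_0}{4}=-\tfrac{I}{12}$ coming from the Padé matching.

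The hardest case is $\ell = 1$, where neither $A^2 \Lambda_3(h)$ nor $A^2 R_h$ is bounded in $X$ individually and the estimate must proceed entirely via cancellation. A cleaner alternative that avoids treating the two operators separately is the single-integral representation
\bbbb
(S(h)-T(h))v = \tfrac{1}{4}\int_0^h y^2\,T(h-y)A^3R_y^2 v\,dy,
\eeee
obtained by differentiating $g(y):=T(h-y)S(y)v$, using $S'(y)=AR_y^2$ together with the algebraic identity $(R_y-I)^2=\tfrac{y^2}{4}A^2R_y^2$, and integrating from $0$ to $h$ with $g(0)=T(h)v$ and $g(h)=S(h)v$. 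The explicit $y^2$ weight in the kernel supplies the extra power of $h$, and by redistributing $A^3R_y^2 = A^{3-\ell}\cdot A^\ell R_y^2$ and pairing each leftmost factor of $A$ with the smoothing of $T(h-y)$ via Lemma 2.3 (i), the integrand is shown to be absolutely integrable with the required decay, delivering the desired $Ch^{\ell+1}$ bound after applying Assumption 2.1 to the remaining contractions.
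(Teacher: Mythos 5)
Your two remainder identities are both correct, and the first one is, up to bookkeeping, exactly the paper's route: the paper likewise iterates $(\ref{lam2})$ to write $T(h)=I+hA\Lambda_1(h)$ and arrives at the identity $(\ref{semi1c})$, namely $T(h)v-S(h)v=\tfrac{h^3}{2}A^3\bigl(I-\tfrac{h}{2}A\bigr)^{-1}\bigl[\int_0^1 w(w-1)T(h-hw)\,dw\bigr]v$, which coincides with your $h^3A^3\bigl[\Lambda_3(h)-\tfrac14 R_h\bigr]v$. So the algebraic core of your argument matches the paper's, and your second, single-integral representation is a valid variant of it.

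The gap is in the closing estimates, and it is not repairable at the stated order. For $\ell=2$ your reduction needs $\|A[\Lambda_3(h)-\tfrac14R_h]\|\le C$ uniformly in $h$; this is false. Test it on a spectral component $\lambda=-1/h$: there $\Lambda_3(h)(\lambda)=\tfrac12\int_0^1 e^{-(1-w)}w^2\,dw=\tfrac12-e^{-1}\approx 0.132$ while $\tfrac14R_h(\lambda)=\tfrac16\approx 0.167$, so $|\lambda|\,\bigl|\Lambda_3(h)(\lambda)-\tfrac14R_h(\lambda)\bigr|\approx 0.035/h$; the Pad\'e cancellation is fully spent producing the prefactor $h^3$ and leaves nothing extra. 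The integral representation has the same ceiling: pairing $A^{3-\ell}$ with the smoothing of $T(h-y)$ via Lemma 2.3(i) produces $\int_0^h y^2(h-y)^{-(3-\ell)}\,dy=\infty$ for $\ell\le 2$, while the workable pairing $(AR_y)^{3-\ell}$ with $\|AR_y\|\le 4/y$ yields $\int_0^h \tfrac{y^2}{4}(4/y)^{3-\ell}\,dy\,\|(-A)^{\ell+s}u_0\|\le Ch^{\ell}\|(-A)^{\ell+s}u_0\|$ --- one power of $h$ short. Indeed, the scalar computation $\sup_{x>0}\bigl|e^{-x}-\tfrac{2-x}{2+x}\bigr|/x^{\ell}\asymp 1$ shows that for operators with unbounded negative spectrum (the Laplacian, say) one has $\sup_{\lambda\le 0}|T(h)(\lambda)-S(h)(\lambda)|/|\lambda|^{\ell}\asymp h^{\ell}$, so the bound $Ch^{\ell+1}$ cannot hold with only $\ell$ powers of $A$ available; it requires $u_0\in D((-A)^{\ell+1+s})$ (the standard single-step Brenner--Thom\'ee estimate for the $(1,1)$ Pad\'e scheme). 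To be fair, the paper's own proof hides exactly this step behind ``taking the norm of both sides of $(\ref{semi1c})$,'' so you have reproduced the substantive part of the published argument and correctly isolated where it is incomplete; but the proposed cancellation and smoothing bounds do not supply the missing power of $h$, and no correct argument can, under the stated hypothesis.
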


\begin{proof}
By employing $(\ref{lam1})$ and $(\ref{lam2}),$ we readily obtain
\bbb
&&\left(I-\frac{h}{2}A\right)\left[T(h)(-A)^su_0 - S(h)(-A)^su_0\right] \nnn\\
&& ~~~~~~~~~~~~~~~~~~~~ =  \left[\left(I-\frac{h}{2}A\right)\left(I + hA\Lambda_1(h)\right) - \left(I+\frac{h}{2}A\right)\right](-A)^su_0\nnn\\
&& ~~~~~~~~~~~~~~~~~~~~ = hA\left[\Lambda_1(h) - \frac{h}{2}A\Lambda_1(h) - I\right](-A)^su_0\nnn\\
&& ~~~~~~~~~~~~~~~~~~~~ = \frac{h^2}{2}A^2\left[2\Lambda_2(h) - \Lambda_1(h)\right](-A)^su_0\label{semi1}
\eee
for $u_0\in D((-A)^{2+s}).$ Further, consideration of $(\ref{semi1})$ gives
\bbb
2\Lambda_2(h) - \Lambda_1(h) & = & \int_0^h \left[\frac{2y-h}{h^2}\right]T(h-y)\,dy\nnn\\
& = & \int_0^1 (2w-1)T(h - hw)\,dw\nnn\\
& = & hA\int_0^1 w(w-1)T(h-hw)\,dw.\label{semi1a}
\eee
Combining $(\ref{semi1})$ and $(\ref{semi1a})$ finally gives
\bbb
&& T(h)(-A)^su_0 - S(h)(-A)^su_0\nnn\\
&& ~~~~~~~~~~~~~~~ = \frac{h^3}{2}A^3\left(I-\frac{h}{2}A\right)^{-1}\left[\int_0^1 w(w-1)T(h-hw)\,dw\right](-A)^su_0. \label{semi1c}
\eee
The result for $\ell=1$ then follows by taking the norm of both sides of $(\ref{semi1c}).$
Since $A(I-A)^{-1}$ is a bounded operator on $X,$ it follows that $(\ref{semi1c})$ holds for $u_0\in D(A^{1+s}),$ and taking the norm of both sides yields the result for $\ell=2.$ 
\end{proof}

Combining the above results, we have the following theorem demonstrating the desired convergence rate of the proposed scheme. 

\begin{theorem}
Let $u_0\in D((-A)^{\ell + s}),\ \ell = 1,2.$ Then, we have
$$\left\|u(t) - v(t)\right\| \le C\left(M^{s-1}e^{\mu(A)M} + h^{\ell+s-1}\right),\quad t\in [0,\infty),$$
where $C$ is a constant independent of $h$ and $k=0,\ldots,N.$
\end{theorem}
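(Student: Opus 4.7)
The plan is to split the error via the triangle inequality into a truncation error and a discretization error:
$$\|u(t) - v(t)\| \le \|u(t) - u^{(M)}(t)\| + \|u^{(M)}(t) - v(t)\|.$$
The first term is handled directly by Lemma 3.1, yielding the $CM^{s-1}e^{\mu(A)M}\|(-A)^s u_0\|$ contribution.

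For the second term I would decompose
$$u^{(M)}(t) = \frac{1}{\Gamma(s)}\sum_{k=0}^{N}\int_{z_k}^{z_{k+1}} z^{s-1}e^{-t^2/4z}T(z)(-A)^s u_0\,dz$$
and write $v(t) = \sum_{k=0}^{N}\gamma_k(t)\,S(h)^k S(h/2)(-A)^s u_0$, then insert $\pm\sum_{k=0}^{N}\gamma_k(t)T(z_{k+1/2})(-A)^s u_0$. This splits $\|u^{(M)}(t) - v(t)\|$ into a \emph{quadrature} piece, controlled termwise by Lemma 4.1, and a \emph{semigroup approximation} piece of the form $\sum_{k=0}^{N} \gamma_k(t)\bigl[T(z_{k+1/2}) - S(h)^k S(h/2)\bigr](-A)^s u_0$.

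The quadrature piece has $N+1 = M/h$ summands each of order $h^{\ell+s}$, giving a total contribution of order $Mh^{\ell+s-1}$. For the semigroup piece I would use the identity $T(z_{k+1/2}) = T(h)^k T(h/2)$ together with the telescoping
$$T(h)^k T(h/2) - S(h)^k S(h/2) = \sum_{j=0}^{k-1} S(h)^j\bigl[T(h)-S(h)\bigr]T(h)^{k-1-j}T(h/2) + S(h)^k\bigl[T(h/2)-S(h/2)\bigr].$$
Because $A$, $T(h)$, $S(h)$, and $(-A)^s$ all commute on the relevant domains, each of the $k+1 \le N+1$ summands can be rearranged so that Lemma 4.2 applies, and the contractivity of $T$ (from Assumption 2.1) and $S$ (from Assumption 3.1) ensures each summand is bounded in norm by $Ch^{\ell+1}$. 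Summing in $k$ and invoking the uniform bound $\sum_k \gamma_k(t) \le C$ established in the proof of Lemma 3.2 yields an $O(Mh^\ell)$ bound on the semigroup piece, which is dominated by the quadrature contribution since $s<1$.

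The hard part will be the telescoping step: one must carefully verify that the bound in Lemma 4.2 survives when the vector $(-A)^s u_0$ is replaced by $T(h)^{k-1-j}T(h/2)(-A)^s u_0$ inside each summand. This relies on the commutativity of the functional calculus, so that the higher-order expressions $A^{\ell+2}u_0$ governing the constant in Lemma 4.2 can be transported past the semigroup factors without enlarging the constant, after which contractivity yields a uniform bound. The resulting factor of $M$ is absorbed into the constant $C$, which may depend on $M$ but is independent of $h$ and $k$, matching the claimed estimate.
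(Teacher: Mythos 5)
Your proposal is correct and follows essentially the same route as the paper: the same three-way splitting into truncation ($I_1$, via Lemma 3.1), quadrature ($I_2$, via Lemma 4.1 summed over the $N+1 = M/h$ subintervals), and semigroup-approximation error ($I_3$, via a telescoping identity, Lemma 4.2, the contractivity of $T$ and $S$, and the uniform bound on $\sum_k \gamma_k(t)$). The only differences are cosmetic — your telescoping places the $S$ and $T$ factors in the opposite order, and your per-term quadrature accounting yields $Mh^{\ell+s-1}$ rather than the paper's slightly sharper $Mh^{\ell+s}$ — and you correctly flag and resolve the commutation point needed to apply Lemma 4.2 inside the telescoped sum.
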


\begin{proof}
We begin by writing the difference as follows
\bbbb
&& u(t) - v(t)\\ 
&& ~~~~~ = \underbrace{\frac{1}{\Gamma(s)}\int_0^\infty z^{s-1}e^{-t^2/4z}T(z)(-A)^su_0\,dz - \frac{1}{\Gamma(s)}\int_0^M z^{s-1}e^{-t^2/4z}T(z)(-A)^su_0\,dz}_{\mathrel{\mathop:}= \,I_1}\\
&& ~~~~~~~~~~ + \underbrace{\frac{1}{\Gamma(s)}\int_0^M z^{s-1}e^{-t^2/4z}T(z)(-A)^su_0\,dz - \sum_{k=0}^N \gamma_k(t)T(z_{k+1/2})(-A)^su_0}_{\mathrel{\mathop:}= \,I_2}\\
&& ~~~~~~~~~~ + \underbrace{\sum_{k=0}^N \gamma_k(t)T(z_{k+1/2})(-A)^su_0 - \sum_{k=0}^N \gamma_k(t) S^k(h)S(h/2)(-A)^su_0}_{\mathrel{\mathop:}= \,I_3}\\
&& ~~~~~ = I_1 + I_2 + I_3.
\eeee
We will now apply the previous lemmas to $I_1,\ I_2,$ and $I_3.$ First, by Lemma 3.1, it is immediately clear that we have
\bb{est1}
\|I_1\| \le CM^{s-1}e^{\mu(A)M}\|(-A)^su_0\|,
\ee
and thus $(\ref{est1})$ may be made arbitrarily small for large enough $M.$ Next, by Lemma 4.1 we have
\bbb
\|I_2\| & = & \left\|\sum_{k=0}^N\left[\frac{1}{\Gamma(s)}\int_{z_k}^{z_{k+1}} z^{s-1}e^{-t^2/4z}T(z)(-A)^su_0\,dz - \gamma_k(t)T(z_{k+1/2})(-A)^su_0\right]\right\|\nnn\\
& \le & \sum_{k=0}^N\left\|\frac{1}{\Gamma(s)}\int_{z_k}^{z_{k+1}} z^{s-1}e^{-t^2/4z}T(z)(-A)^su_0\,dz - \gamma_k(t)T(z_{k+1/2})(-A)^su_0\right\|\nnn\\
& \le & \sum_{k=0}^N Ch^{\ell+s+1}\nnn\\
& \le & Ch^{\ell+s} \label{est2},
\eee
where the value of $C$ changes throughout the calculations, but remains independent of $h$ and $k=0,...,N,$ with $\ell = 1$ or $\ell =2,$ depending on the regularity of the initial data. Finally, by Lemma 4.2, Assumption 2.1, and Assumption 3.1,  we have
\bbb
\|I_3\| & = & \left\|\sum_{k=0}^N\gamma_k(t) \left[T^k(h)T(h/2) - S^k(h)S(h/2)\right](-A)^su_0\right\|\nnn\\
& = & \left\|\sum_{k=0}^N\gamma_k(t) \left[T^k(h)T(h/2) - T^k(h)S(h/2)\right.\right.\nnn\\
&& ~~~~~~~~~~~~~~~ \left.\left.\vphantom{\sum_{k=0}^N} + T^k(h)S(h/2) - S^k(h)S(h/2)\right](-A)^su_0\right\|\nnn\\
& \le & \sum_{k=0}^N\gamma_k(t) \|T^k(h)\|\|T(h/2) - S(h/2)\|\|(-A)^su_0\|\nnn\\
&& ~~~~~  + \sum_{k=0}^N \gamma_k(t) \sum_{j=1}^k \|T^{k-j}(h)\|\left\|T(h) - S(h)\right\|\|S^{j-1}(h)\|\|S(h/2)\|\|(-A)^su_0\|\nnn\\
& \le & \sum_{k=0}^N \gamma_k(t) \left[Ch^{\ell + 1} + \sum_{j=1}^k Ch^{\ell+1} \right]\nnn\\
& \le & C\sum_{k=0}^N\gamma_k(t)(1+k)h^{\ell+1}\nnn\\
& \le & Ch^\ell, \quad\label{est3}
\eee
where, once again, $C$ is independent of $h$ and $k=0,...,N,$ with $\ell$ as before. Combining the obtained bounds in $(\ref{est1})$-$(\ref{est3})$ yields the desired result. 
\end{proof}

\begin{rem}
The first term in our estimate
$$M^{s-1}e^{\mu(A)M}$$
is an expected and standard term (a similar term appears in, for example, \cite{bonito2019numerical,chen2015pde}). However, we intend to remove this estimate in future work by relating the choice of $M$ to the parameter $h.$
\end{rem}

\section{Numerical Experiments} \clearallnum

In order to further verify the sharpness of our analysis and demonstrate the efficacy of the proposed method, we present some numerical experiments. The first and second examples consider a one-dimensional and two-dimensional problem, respectively, where the given operator is the Laplace operator. In both cases, we see that the expected theoretical convergence results are recovered. The third example outlines how our theory easily includes general strongly elliptic differential operators which still satisfy the theoretical results from Theorem 4.1.

It is the case that the nonlocality of fractional problems can increase memory requirements to the point that some standard methods become quite inefficient. To that end, we briefly explore the memory requirements needed to implement the proposed method $(\ref{approx}).$ If we fix a particular numerical approximation of the operator $A,$ then our method can be shown to be $\mathcal{O}(N).$ Initially, it may seem that the computations are more involved, due to the evaluation of the operator $S^k(h)$ (seeming to require $N$ additions and $(N^2+N)/2$ multiplications), but these evaluations can be be completed with only $N$ additions and $N$ multiplications by employing nested operations similar to the classical Horner algorithm for polynomials. To determine an exact efficiency, one must consider the precise discretization method employed to approximate the operator $A.$ Due to the inversion needed to compute $S,$ it is clear that discretizations resulting in a banded matrix structure will prove the most efficient. We leave more detailed studies of the efficiency, as well as computational run time trials, for future work.

\subsection{A One-Dimensional Example}

Let $\Omega\mathrel{\mathop:}= (-1,1)\subset\RR$ and define $X = L^2(\Omega)$ with its usual inner product and norm. Consider the problem
\bb{ex1}
\left\{\begin{array}{ll}
(-\Delta)^su = f, & \quad x\in \Omega,\\
u = 0, & \quad x \in \partial\Omega,
\end{array}\right.
\ee
where
$$f(x) = \left(\frac{\pi}{2}\right)^{2s}\sin\left(\frac{\pi(1+x)}{2}\right),$$
and the fractional Laplacian is defined via the spectral definition (which can be done due to the smoothness of $f$). That is,
\bb{spec_frac}
(-\Delta)^su(x) \mathrel{\mathop:}= \sum_{k\in\NN} \lambda_k^s u_k \varphi_k,
\ee
where the set $\{\varphi_k\}_{k\in\NN}\subset H_0^1(\Omega)$ forms an orthonormal basis of $X$ and satisfy
$$\left\{\begin{array}{ll}
(-\Delta)^s\varphi_k = \lambda_k\varphi_k, & \quad x\in \Omega,\\
\varphi_k = 0, & \quad x\in\partial\Omega,
\end{array}\right.$$
for each $k\in\NN,$ and
$$u_k \mathrel{\mathop:}= [ u,\varphi_k],\quad k\in\NN,$$
where $[\cdot,\cdot]$ is the usual inner product on $X.$ The true solution to $(\ref{ex1})$ can be found explicitly and is given by
\bb{ex1sol}
u(x) = \sin\left(\frac{\pi(1+x)}{2}\right).
\ee
This true solution may be obtained by using $(\ref{spec_frac}).$ We will first demonstrate how our method can approximate problems such as $(\ref{ex1})$ by taking the trace of $(\ref{approx}),$ and then consider the convergence properties of the approximation of the solution to the extension problem given by $(\ref{b1}).$ 

As has been demonstrated in this work, it is the case that $(\ref{ex1})$ can be recovered from the following extension problem:
\bb{ex2}
\left\{\begin{array}{ll}
v''(t)+\frac{\alpha}{t}v'(t) = -Av(t), & \quad t\in (0,\infty),\\
v(0) = u,
\end{array}\right.
\ee
where $\alpha = 1 - 2s$ and $A\mathrel{\mathop:}= \Delta\,:\,H_0^1(\Omega)\cap H^2(\Omega)\subset X \to L^2(\Omega).$ In order to implement the problem, we replace the operator $A$ with some finite dimensional discretization. For simplicity, we employ the standard second-order finite difference method on uniform grids \cite{Iserles:2008:FCN:1506265}. It is worth noting that the use of any other consistent spatial disretization method is acceptable, and as seen below, the convergence rate in the spatial direction will not be affected. The freedom to choose the discretization method for the operator $A$ is one of the primary benefits of this approach.

The tables in Fig. 5.1 demonstrate the numerical convergence rates of the scheme given by $(\ref{approx}).$ For these experiments, we chose a modest value of $M=100,$ in order to demonstrate the efficacy of the method with respect to the truncation. It is clear from this table that the numerical solution converges at the expected rates given by Theorem 4.1. For the sake of clarity, we present a plot of the numerical solution to $(\ref{ex2})$ in Fig. 5.2.

\begin{figure}[!htb]
\centering
$$\begin{array}{ccc}
\mbox{Spatial Convergence Rates} & & \mbox{Extended Variable Convergence Rates}\\
& & \\
\begin{array}{c|c}
\delta x & \mbox{Convergence Rate}\\
\hline
\hline
0.04000 & ---\\
0.02000 & 1.99676\\
0.01000 & 1.99913\\
0.00500 & 1.99976\\
0.00250 & 1.99993\\
0.00125 & 2.00007
\end{array}
& &
\begin{array}{c|c}
h & \mbox{Convergence Rate}\\
\hline
\hline
2.00\times 10^{-4} & --- \\
1.00\times 10^{-4} & 1.89877\\
5.00\times 10^{-5} & 1.90122\\
2.50\times 10^{-5} & 1.90430\\
1.25\times 10^{-5} & 1.91225\\
6.25\times 10^{-6} & 1.91275
\end{array}
\end{array}$$

\caption{Convergence rates of the proposed approximation method when $s=0.9$ and $M=100.$ The convergence in the original spatial variable is the expected second order and the extended variable convergence rate is approximately $1+s = 1.9,$ as predicted.}
\end{figure}

Finally, we finish this example by presenting convergence rates for several other choices of the parameter $s.$ These results are presented in Fig. 5.3 and further verify our theoretical results. For brevity, we omit the convergence rates for the parameter $h,$ as they are not affected by these choices and match those of Fig. 5.1.

\begin{figure}[htb!]
\centering

\includegraphics[scale=0.5]{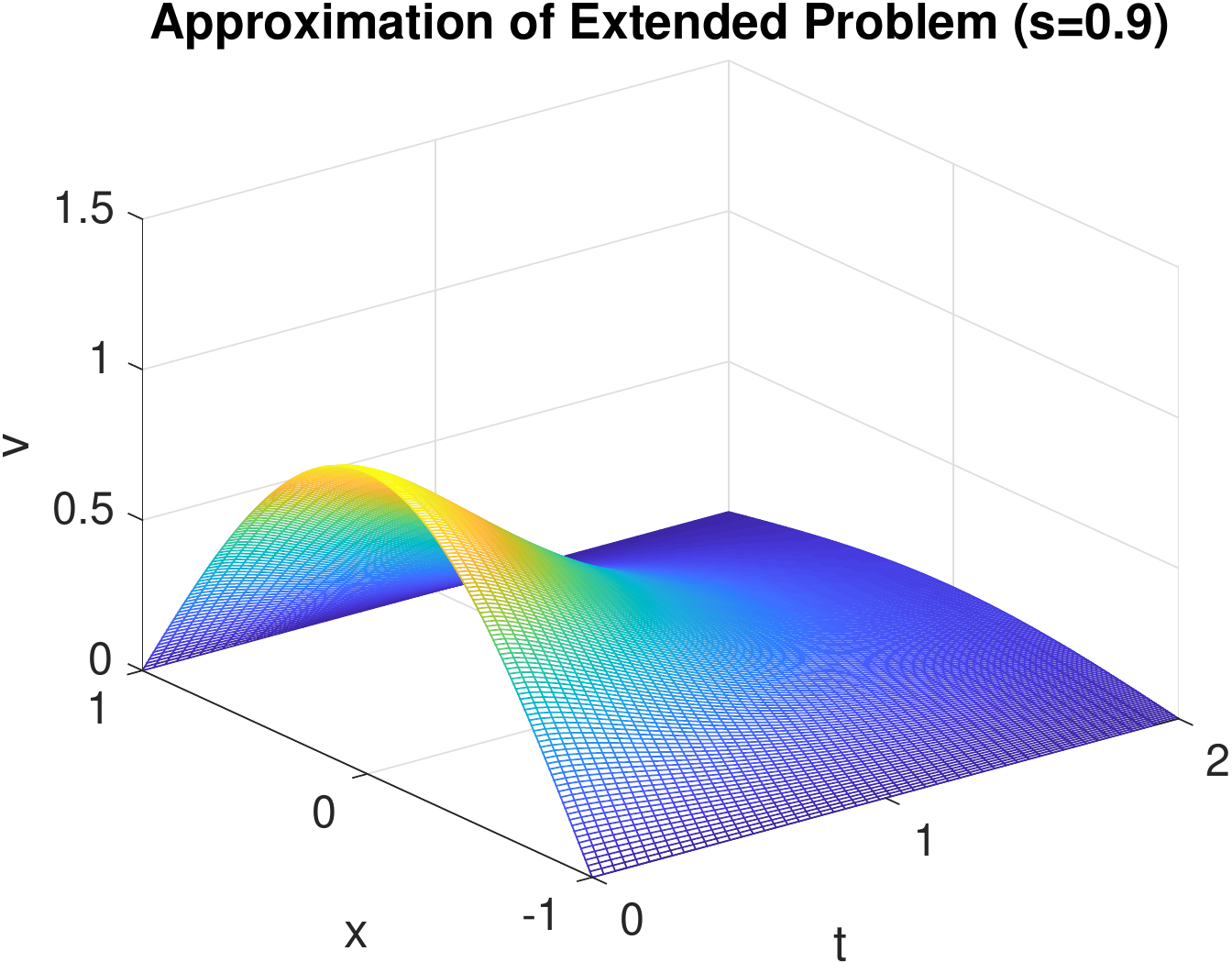}

\caption{A three-dimension surface representing an approximation to the solution, $v(x,t),$ of the extension problem $(\ref{ex2})$ on $[0,T].$ The rapid solution decay predicted by Lemma 3.1 is demonstrated clearly.}
\end{figure}

\begin{figure}[htb!]
\centering
$$
\begin{array}{c|c|c|c|c}
h & ~s = 0.8~ & ~s = 0.7~ & ~s = 0.6~ & ~s = 0.5~\\
\hline
\hline
~2.00\times 10^{-4}~ & --- & --- & --- & ---\\
~1.00\times 10^{-4}~ & ~1.79112~ & ~1.68825~ & ~1.58002~ & ~1.47790 \\
~5.00\times 10^{-5}~ & ~1.79982~ & ~1.68944~ & ~1.58344 & ~1.47999 \\
~2.50\times 10^{-5}~ & ~1.80012~ & ~1.69020~ & ~1.58566 & ~1.48212 \\
~1.25\times 10^{-5}~ & ~1.80323~ & ~1.70043~ & ~1.59111 & ~1.48892 \\
~6.25\times 10^{-6}~ & ~1.80895~ & ~1.70280~ & ~1.59988 & ~1.49489
\end{array}
$$

\caption{Extended variable convergence rates of the proposed method for various choices of the parameter $s,$ with $M=100.$ It is clear from the table that the expected convergence rate of order $1+s$ is approximately recovered, as expected.}
\end{figure}

\begin{rem}
We wish to note that our experiments have shown that more modest choices of $M$ may still provide accurate numerical solutions, as well. However, we leave a more detailed study of these choices for future endeavors.
\end{rem}

\begin{rem}
We also wish to note that one could potentially increase efficiency of the proposed numerical method by allowing it to be run in parallel. For instance, one may rewrite the method as
\bb{para1}
v(t) = \sum_{k=0}^{\lfloor N/2\rfloor} \gamma_{2k} S^{2k}(h)g + S(h)\sum_{k=0}^{\lfloor N/2\rfloor} \gamma_{2k+1}S^{2k}(h)g,
\ee
where 
$$g\mathrel{\mathop:}= S(h/2)(-A)^su_0.$$
While the implementation of $(\ref{para1})$ requires more direct evaluations than the original algorithm, one can easily compute $(\ref{para1})$ in parallel by noting that 
\bb{para2}
S^{2k}(h) = S^k(2h) + \mathcal{O}(h^2). 
\ee
While there is a slight decrease in accuracy due to employing $(\ref{para2}),$ it is the case that for complicated operators $A$ this could potentially be a favorable trade-off.
\end{rem}

\subsection{A Two-Dimensional Example}

Let $\Omega = \{(x,y)\in\RR^2\ : \ x^2+y^2<1\}.$ We then once again consider the problem $(\ref{ex1}),$ but with
$$f(x,y) = (\lambda_{1,1})^s \varphi_{1,1},$$
where $(\lambda_{1,1},\varphi_{1,1})$ are the first eigenpair of the operator $\Delta$ on $\Omega,$ and are given by
\bb{epair2}
\varphi_{1,1}(r,\theta) = J_1(j_{1,1}r)(A_{1,1}cos\theta + B_{1,1}\sin\theta)
\ee
and
\bb{epair1}
\lambda_{1,1} = j_{1,1}^2,
\ee
where $J_1$ is the first Bessel function of the first kind, $j_{1,1}$ is the first zero of $J_1,$ and $A_{1,1}$ and $B_{1,1}$ are real-valued constants that ensure $\|\varphi_{1,1}\|_X = 1.$

By using $(\ref{spec_frac})$ one may determine the true solution of the extension problem to be given by
\bb{true_circ}
u(r,\theta,t) = \frac{2^{1-s}t^2}{\Gamma(s)}(\lambda_{1,1})^{s/2}\varphi_{1,1}(r,\theta)K_s(\sqrt{2}\pi t),
\ee
where $K_s$ is the modified Bessel function of the second kind (see \cite{abramowitz1965handbook} for more information regarding the aforementioned Bessel functions and their properties). Moreover, it can be shown that $j_{1,1} \approx 3.8317.$ 

Just as before, we discretize the operator $A$ using standard finite differences (see, for example, \cite{JONES2016207} for more details on doing so for $\Omega$ as above). However, in this example we will only consider convergence aspects of the method in the extended variable direction. 
A table with convergence rates for various choices of the parameter $s$ are given in Fig. 5.4. As before, the results are in agreement with the rates given in Theorem 4.1.

\begin{figure}[htb!]
\centering
$$
\begin{array}{c|c|c|c|c}
h & ~s = 0.8~ & ~s = 0.6~ & ~s = 0.4~ & ~s = 0.2~\\
\hline
\hline
~2.00\times 10^{-4}~ & --- & --- & --- & ---\\
~1.00\times 10^{-4}~ & ~1.79112~ & ~1.58825~ & ~1.38002~ & ~1.17790 \\
~5.00\times 10^{-5}~ & ~1.79982~ & ~1.58944~ & ~1.38344 & ~1.17999 \\
~2.50\times 10^{-5}~ & ~1.80012~ & ~1.59020~ & ~1.38566 & ~1.18212 \\
~1.25\times 10^{-5}~ & ~1.80323~ & ~1.60043~ & ~1.39111 & ~1.18892 \\
~6.25\times 10^{-6}~ & ~1.80895~ & ~1.60280~ & ~1.39988 & ~1.19489
\end{array}
$$

\caption{Convergence rates for the numerical solution $v$ with respect to the extended value parameter $h.$ For these computations, we fixed $M=100.$ It is clear that the approximate theoretical convergence rate of $1+s$ is recovered.}
\end{figure}

\subsection{A General Second-Order Elliptic Operator}

As should be clear from the outlined theory, the proposed numerical algorithm will still be applicable when the operator $A$ is a general strongly elliptic operator on some domain $\Omega\subset \RR^d.$ Following the construction developed in \cite{chen2015pde}, let $\mathscr{L}$ be given by
\bb{L}
\mathscr{L}u \mathrel{\mathop:}= -\mbox{div}_x(A\nabla_x u) + cu,
\ee
where $x\in\Omega$ (here we emphasize the difference between the variable $x$ and the extended variable, $t.$). We assume that the function $c\in L^\infty(\Omega)$ with $c\ge 0$ almost everywhere, $A\in C^{0,1}(\Omega,\mbox{GL}(d,\RR))$ is symmetric and positive definite, and that $\Omega$ is sufficiently smooth. Then, it follows that given $f\in X\mathrel{\mathop:}= L^2(\Omega),$ there exists a unique $u\in D(\mathscr{L})\mathrel{\mathop:}= H_0^1(\Omega)\cap H^2(\Omega)$ such that
$$\left\{\begin{array}{ll}
\mathscr{L}u = f, & \quad x\in\Omega,\\
u = 0, & \quad x\in\partial\Omega.
\end{array}\right.$$

Moreover, we have that $\mathscr{L}$ satisfies Assumption 2.1. It is clear that the operator's domain is densely defined in $X.$ If we employ the same framework as that in Example 2.1, then we have
$$[-\mathscr{L}u,u] = -[A\nabla_x u, \nabla_x u] - [cu,u] \le -C[u,u],\quad \mbox{for all } u\in D(\mathscr{L}),$$
where $[\cdot,\cdot]$ is the standard inner product on $X,$ $C$ is a positive constant, and the final inequality follows from the fact that $A$ is positive definite and $c$ is positive almost everywhere. Thus, we have $\mbox{Re}[-\mathscr{L}u,u] < 0$ for all $u\in D(\mathscr{L}),$ as desired. Further, the range condition of a strictly $m-$dissipative operator is satisfied, as well. Thus, it follows that our approximation, given by $(\ref{approx}),$ of the Bessel-type problem
\bb{elip_b}
\left\{\begin{array}{ll}
v''(t) + \frac{\alpha}{t}v'(t) = -\mathscr{L}v(t), & \quad t\in(0,\infty),\\
v(0) = u \in X,
\end{array}\right.
\ee
is well-defined, stable, and convergent. Moreover, it follows that the initial datum, $u,$ will be the solution of the following nonlocal elliptic problem
\bb{elip_frac}
\left\{\begin{array}{ll}
\mathscr{L}^s u = f, & \quad x \in \Omega,\\
u = 0, & \quad x\in\partial\Omega.
\end{array}\right.
\ee
It should be noted that we have omitted some more technical details, such as the fact that the operator $\mathscr{L}^s$ may be extended to the fractional space $\mathbb{H}^s(\Omega),$ since such things are not relevant to the current discussion (this is because we are considering functions with higher regularity to guarantee the theoretical rates of convergence). However, interested readers may see \cite{Gale2013}, and the reference therein, for more details. 

In order to provide an explicit example, we let $d=1,$ $\Omega \mathrel{\mathop:}= (-1,1)\subset \RR,$ $A \equiv 1,$ $c\equiv 1,$ and $f(x) = \sin(\pi x).$ Since we do not have an explicit solution to $(\ref{elip_frac})$ on hand, we will approximate the numerical rate of convergence. Let $v_h$ denote the numerical solution obtained by $(\ref{approx})$ on a grid defined by the parameter $h.$ Then we define $v_{h/j}$ to be the numerical solution obtained by $(\ref{approx})$ on a grid with spacing $h/j,\ j=2,4.$ We then have the following Milne device for determining the numerical rate of convergence, denoted $p_h,$ given by
\bb{num_rate}
p_h \approx \frac{1}{\ln 2}\ln \frac{\|v_h - v_{h/2}\|}{\|v_{h/2} - v_{h/4}\|},
\ee
where the differences are defined on the grid associated to $h$ (the least refined grid) and the norm $\|\cdot\|$ is an appropriately chosen norm (in our case, the discrete $X-$norm).

\begin{rem}
The convergence of the numerical method given by $(\ref{approx})$ is also influenced by the error of truncating domain of the Bessel-type problem. However, by assuming this error is roughly constant, which is reasonable for a fixed value of $M$ and $s,$ the approximation given by $(\ref{num_rate})$ will be accurate.
\end{rem}

The table in Fig. 5.5 demonstrates the obtained numerical convergence rates for various choices of $s,$ with $M=100.$ We see from the table that the computed values still reflect the expected theoretical values obtained in Theorem 4.1. Moreover, we present the convergence rate for the case when $s=1,$ to verify the expected second-order convergence of our method.

\begin{figure}[htb!]
\centering
$$
\begin{array}{c|c|c|c|c|c}
h & ~s=1.0~ & ~s = 0.9~ & ~s = 0.8~ & ~s = 0.7~ & ~s = 0.6~ \\
\hline
\hline
~2.00\times 10^{-4}~ & --- & --- & --- & --- & --- \\
~1.00\times 10^{-4}~ & ~1.99825~ & ~1.88933~ & ~1.78458~ & ~1.67989 & 1.58991\\
~5.00\times 10^{-5}~ & ~1.99914~ & ~1.89345~ & ~1.79111 & ~1.68631 & 1.59944\\
~2.50\times 10^{-5}~ & ~2.00031~ & ~1.89844~ & ~1.80001 & ~1.68946 & 1.60003\\
~1.25\times 10^{-5}~ & ~2.00121~ & ~1.90034~ & ~1.80143 & ~1.69526 & 1.60139\\
~6.25\times 10^{-6}~ & ~2.00317~ & ~1.90127~ & ~1.80177 & ~1.70020 & 1.60155
\end{array}
$$

\caption{Convergence rates of the numerical method when applied to a general elliptic problem, per the derived Milne device. We fix $M=100$ and consider various values of $s.$ The results agree with the theoretical values obtained in Theorem 4.1.}
\end{figure}


\section{Concluding Remarks}

In this article, we consider an approximation to an abstract extension problem, that is the generalization of the method demonstrated by Caffarelli and Silvestre in \cite{doi:10.1080/03605300600987306}. This generalized setting allows for the consideration of a wider class of problems, while also allowing for the use of robust nonlinear functional analytic techniques. By taking this novel approach, we develop methods which differ from those presented in \cite{chen2015pde,d2013fractional,huang2014numerical,nochetto2015pde}, yet are quite effective. In particular, this approach allows for the consideration of semigroup theory within the numerical framework. As such, the method is able to exhibit favorable qualities.

Herein, we provide an approach where we approximate the true solution to the extended problem, as compared to the standard approach of discretizing the underlying system first. The approximation is shown to be stable, consistent, and convergent for appropriately smooth initial data. Moreoever, the employed Pad{\'e} approximation allows for the efficient construction of the numerical solution via factorization techniques. However, one could easily replace this approximation with any desired second-order (or higher) approximation to the associated abstract Cauchy problem.

Moving forward, we will consider improved approximations methods within the same abstract framework. In particular, we will consider abstract analysis of higher-order Pad{\'e} approximations and quadrature techniques, including Runge-Kutta and exponential integration methods, with the goal of constructing methods of arbitrary order. This will also allow for the consideration of the underlying algebraic structures of these methods, as has been done with Butcher and Lie-Butcher series for classical problems \cite{munthe2018lie}. We also intend to extend these results to abstract problems posed on abstract manifold structures, allowing for the consideration of novel problems in differential geometry. Moreover, we hope to better understand how well this extension technique preserves and approximates the Dirichlet-to-Neumann map that is of fundamental importance in the underlying theory. Preliminary work indicates that our proposed numerical method faithfully represents this operator after applying the conormal derivative given by $(\ref{e2})$, but there is still a need to finish developing a rigorous analysis of this application. However, more immediate concerns will be the deeper comparison of our newly proposed methods with popular existing methods for the fractional Laplacian problem.








\section*{Acknowledgements}

This work was supported by the NSF grant number 1903450. The author would like to thank Akif Ibraguimov, of Texas Tech University, for introducing this problem to them. The author is also thankful for Akif's time and insightful suggestions that greatly improved the quality of the work herein. Finally, the author is thankful to the reviewers who provided numerous useful suggestions that greatly improved the presentation of the numerical experiments in Section 5.

\bibliographystyle{plain}
\bibliography{Frac_Bib1}

\begin{thebibliography}{10}

\bibitem{abramowitz1965handbook}
M.~Abramowitz and I.~A. Stegun.
\newblock {\em Handbook of mathematical functions: with formulas, graphs, and
  mathematical tables}, volume~55.
\newblock Courier Corporation, 1965.

\bibitem{antil2015fem}
H.~Antil and E.~Ot{\'a}rola.
\newblock A {FEM} for an optimal control problem of fractional powers of
  elliptic operators.
\newblock {\em SIAM Journal on Control and Optimization}, 53(6):3432--3456,
  2015.

\bibitem{doi:10.1080/03605302.2017.1363229}
W.~Arendt, A.~F. M.~Ter Elst, and M.~Warma.
\newblock Fractional powers of sectorial operators via the
  {D}irichlet-to-{N}eumann operator.
\newblock {\em Communications in Partial Differential Equations}, 43(1):1--24,
  2018.

\bibitem{barbu2010nonlinear}
V.~Barbu.
\newblock {\em Nonlinear differential equations of monotone types in Banach
  spaces}.
\newblock Springer Science \& Business Media, 2010.

\bibitem{bonito2019numerical}
A.~Bonito, W.~Lei, and J.~E. Pasciak.
\newblock Numerical approximation of the integral fractional {L}aplacian.
\newblock {\em Numerische Mathematik}, 142(2):235--278, 2019.

\bibitem{budd1999geometric}
C.~J. Budd and A.~Iserles.
\newblock Geometric integration: numerical solution of differential equations
  on manifolds.
\newblock {\em Philosophical Transactions of the Royal Society of London A:
  Mathematical, Physical and Engineering Sciences}, 357(1754):945--956, 1999.

\bibitem{doi:10.1080/03605300600987306}
L.~Caffarelli and L.~Silvestre.
\newblock An extension problem related to the fractional {L}aplacian.
\newblock {\em Communications in Partial Differential Equations},
  32(8):1245--1260, 2007.

\bibitem{10.1086/338705}
P.~Carr, H.~Geman, D.~B. Madan, and M.~Yor.
\newblock The fine structure of asset returns: An empirical investigation.
\newblock {\em The Journal of Business}, 75(2):305--332, 2002.

\bibitem{chen2015pde}
L.~Chen, R.~H. Nochetto, E.~Ot{\'a}rola, and A.~J. Salgado.
\newblock A {PDE} approach to fractional diffusion: a posteriori error
  analysis.
\newblock {\em Journal of Computational Physics}, 293:339--358, 2015.

\bibitem{cushman1993nonlocal}
J.~H. Cushman and T.~R. Ginn.
\newblock Nonlocal dispersion in media with continuously evolving scales of
  heterogeneity.
\newblock {\em Transport in Porous Media}, 13(1):123--138, 1993.

\bibitem{deimling2010nonlinear}
K.~Deimling.
\newblock {\em Nonlinear functional analysis}.
\newblock Courier Corporation, 2010.

\bibitem{d2013fractional}
M.~{D`Elia} and M.~Gunzburger.
\newblock The fractional {L}aplacian operator on bounded domains as a special
  case of the nonlocal diffusion operator.
\newblock {\em Computers \& Mathematics with Applications}, 66(7):1245--1260,
  2013.

\bibitem{eringen2002nonlocal}
A.~C. Eringen.
\newblock {\em Nonlocal continuum field theories}.
\newblock Springer Science \& Business Media, 2002.

\bibitem{Gale2013}
J.~E. Gal{\'e}, P.~J. Miana, and P.~R. Stinga.
\newblock Extension problem and fractional operators: semigroups and wave
  equations.
\newblock {\em Journal of Evolution Equations}, 13(2):343--368, Jun 2013.

\bibitem{hairer2006geometric}
E.~Hairer, C.~Lubich, and G.~Wanner.
\newblock {\em Geometric numerical integration: structure-preserving algorithms
  for ordinary differential equations}, volume~31.
\newblock Springer Science \& Business Media, 2006.

\bibitem{hansen2013convergence}
E.~Hansen and E.~Henningsson.
\newblock A convergence analysis of the {P}eaceman--{R}achford scheme for
  semilinear evolution equations.
\newblock {\em SIAM Journal on Numerical Analysis}, 51(4):1900--1910, 2013.

\bibitem{henry2006geometric}
D.~Henry.
\newblock {\em Geometric theory of semilinear parabolic equations}, volume 840.
\newblock Springer, 2006.

\bibitem{huang2014numerical}
Y.~Huang and A.~Oberman.
\newblock Numerical methods for the fractional {L}aplacian: A finite
  difference-quadrature approach.
\newblock {\em SIAM Journal on Numerical Analysis}, 52(6):3056--3084, 2014.

\bibitem{Iserles:2008:FCN:1506265}
Arieh Iserles.
\newblock {\em A First Course in the Numerical Analysis of Differential
  Equations}.
\newblock Cambridge University Press, New York, NY, USA, 2nd edition, 2008.

\bibitem{JONES2016207}
T.~Jones, L.~P. Gonzalez, S.~Guha, and Q.~Sheng.
\newblock A continuing exploration of a decomposed compact method for highly
  oscillatory wave problems.
\newblock {\em Journal of Computational and Applied Mathematics}, 299:207 --
  220, 2016.
\newblock Recent Advances in Numerical Methods for Systems of Partial
  Differential Equations.

\bibitem{lumer1961dissipative}
G.~Lumer and R.~S. Phillips.
\newblock Dissipative operators in a {B}anach space.
\newblock {\em Pacific Journal of Mathematics}, 11(2):679--698, 1961.

\bibitem{martinez2001theory}
C.~Martinez and M.~Sanz.
\newblock {\em The theory of fractional powers of operators}, volume 187.
\newblock Elsevier, 2001.

\bibitem{meichsner2017fractional}
J.~Meichsner and C.~Seifert.
\newblock Fractional powers of non-negative operators in {B}anach spaces via
  the {D}irichlet-to-{N}eumann operator.
\newblock {\em arXiv preprint arXiv:1704.01876}, 2017.

\bibitem{munthe2018lie}
H.~Z. Munthe-Kaas and K.~K. F{\o}llesdal.
\newblock Lie--{B}utcher series, geometry, algebra and computation.
\newblock In {\em Discrete Mechanics, Geometric Integration and Lie--Butcher
  Series}, pages 71--113. Springer, 2018.

\bibitem{nochetto2015pde}
R.~H. Nochetto, E.~Ot{\'a}rola, and A.~J. Salgado.
\newblock A {PDE} approach to fractional diffusion in general domains: a priori
  error analysis.
\newblock {\em Foundations of Computational Mathematics}, 15(3):733--791, 2015.

\bibitem{nochetto2016pde}
R.~H. Nochetto, E.~Otarola, and A.~J. Salgado.
\newblock A {PDE} approach to space-time fractional parabolic problems.
\newblock {\em SIAM Journal on Numerical Analysis}, 54(2):848--873, 2016.

\bibitem{Josh1}
J.~L. Padgett and Q.~Sheng.
\newblock On the positivity, monotonicity, and stability of a semi-adaptive
  {LOD} method for solving three-dimensional degenerate {K}awarada equations.
\newblock {\em J. Math. Anal. Appls}, 439:465--480, 2016.

\bibitem{PADGETT2018210}
J.~L. Padgett and Q.~Sheng.
\newblock Numerical solution of degenerate stochastic kawarada equations via a
  semi-discretized approach.
\newblock {\em Applied Mathematics and Computation}, 325:210 -- 226, 2018.

\bibitem{silling2000reformulation}
S.~A. Silling.
\newblock Reformulation of elasticity theory for discontinuities and long-range
  forces.
\newblock {\em Journal of the Mechanics and Physics of Solids}, 48(1):175--209,
  2000.

\bibitem{soderlind2006logarithmic}
G.~S{\"o}derlind.
\newblock The logarithmic norm. history and modern theory.
\newblock {\em BIT Numerical Mathematics}, 46(3):631--652, 2006.

\bibitem{doi:10.1080/03605301003735680}
P.~R. Stinga and J.~L. Torrea.
\newblock Extension problem and {H}arnack's inequality for some fractional
  operators.
\newblock {\em Communications in Partial Differential Equations},
  35(11):2092--2122, 2010.

\end{thebibliography}





\end{document}